\providecommand{\U}[1]{\protect\rule{.1in}{.1in}}
\newtheorem{theorem}{Theorem}[section]
\newtheorem{proposition}[theorem]{Proposition}
\newtheorem{corollary}[theorem]{Corollary}
\newtheorem{example}[theorem]{Example}
\newtheorem{remark}[theorem]{Remark}
\newtheorem{lemma}[theorem]{Lemma}
\newtheorem{final remark}[theorem]{Final Remark}
\newtheorem{definition}[theorem]{Definition}
\newcommand {\R}{\mathbb{R}}
\newcommand {\N} {\mathbb{N}}
\begin{document}

\title{$L$- and $M$-weakly compact multilinear operators and their linear adjoints}
\author{Geraldo Botelho\thanks{Supported by FAPEMIG grants RED-00133-21 and APQ-01853-23.}~ and Ariel Mon\c c\~ao\thanks{Supported by a CAPES scholarship. \newline 2020 Mathematics Subject Classification: 46A40, 46B42, 47B07, 47B65, 47H60.\newline Keywords: Riesz spaces, Banach lattices, multilinear operators, linear adjoints, order bounded  variation, $L$-weakly compact operators, $M$-weakly compact operators.}}
\date{}
\maketitle

\begin{abstract} Let $X_1, \ldots, X_m$ be Banach spaces and let $E_1, \ldots, E_m,F$ be Banach lattices. Our main results read as follows: (i) The linear adjoint $A^*$ of a continuous multilinear operator $A \colon X_1 \times \cdots \times X_m \to F$ is $M$-weakly compact if and only if $A$ is $L$-weakly compact. (ii) The linear adjoint $A^*$ of a multilinear operator of order bounded variation $A \colon E_1 \times \cdots \times E_m \to F$ is $L$-weakly compact if and only if the linearization of $A$ on the positive projective tensor product is $M$-weakly compact. In our way to prove these results, we develop the basic theory of linear adjoints of multilinear operators between Riesz spaces, we prove that multilinear operators of order bounded variation between Banach lattices are continuous, and we explore different notions of multilinear operators of $M$-weakly compact-type.
\end{abstract}

\section{Introduction}
For a Banach space $X$ and a Banach lattice $E$, it is well known that the adjoint $T^*$ of a bounded linear operator $T \colon E \to X$ is $L$-weakly compact if and only if $T$ is $M$-weakly compact; and the adjoint $T^*$ of a bounded linear operator $T \colon X \to E$ is $M$-weakly compact if and only if $T$ is $L$-weakly compact \cite[Proposition 3.6.11]{MeyerPeter}. The purpose of this paper is to investigate possible extensions of these dualities to the context of multilinear operators. The main results we obtained are stated in the Abstract.

Multilinear operators between Riesz spaces have been studied since the seminal works of Fremlin \cite{FremlinTPAVL,FremlinTPBL}. We mention important developments due to Buskes and van Rooij \cite{Buskes2}, Bu and Buskes \cite{Buskes3}, Loane \cite{Loane} and Bu, Buskes and Kusraev \cite{Buskes}. Linear adjoints of continuous multilinear operators between Banach spaces have been investigated since the seminal work of Aron and Schottenloher \cite{Aron}, further developments can be found, e.g., in \cite{leodanmathscand,  erhanpilar, silviapablo, Mujicatrans, Ryanpacific, pablo}. The bridge connecting these two trends is the study of linear adjoints of multilinear operators from a Banach space to a Banach lattice, from a Banach lattice to a Banach space and between Banach lattices. It is the intention of this paper to give a contribution to this study.

As to operators between Riesz spaces or Banach lattices, we quickly realized that the correct scenario is the environment of multilinear operators of order bounded variation introduced in \cite{Buskes2}. In Section 2 we develop the basic theory of adjoints of multilinear operators of order bounded variation between Riesz spaces.

For Riesz spaces $E_1, \ldots, E_m,F$, we prove that the adjoint of an $m$-linear operator of order bounded variation $A \colon E_1 \times \cdots \times E_m \to F$ is an order bounded, hence regular, and order continuous linear operator from the order dual of $F$ to the space of regular $m$-linear forms on $E_1 \times \cdots \times E_m$ (cf. Theorem \ref{Theo.1.73M}). In Section 3 we prove that multilinear operators of order bounded variation from a Banach lattice to a normed Riesz space are continuous (cf. Theorem \ref{obv-cont}).

The notion of $L$-weakly compact linear operator can be generalized to multilinear operators in two obvious ways: transposing the linear definition word-by-word and considering multilinear operators that are $L$-weakly compact in each variable, that is, operators that are separately $L$-weakly compact. We compare these two notions to each other and to the classical notion of weakly compact multilinear operator. For Banach spaces $X_1, \ldots, X_m$ and a Banach lattice $E$, we prove in Theorem \ref{Lfc} that the adjoint $A^*$ of a continuous $m$-linear operator $A \colon X_1 \times \cdots \times X_m \to E$ is $M$-weakly compact if and only if $A$ is $L$-weakly compact if and only if the linearization $A_L$ of $A$ on the projective tensor product of $X_1, \ldots,X_m$ is $L$-weakly compact.

In the search of multilinear operators whose adjoints are $L$-weakly compact, a natural step is to explore multilinear extensions of the notion of $M$-weakly compact operator. Looking at the literature, some options are available besides of the natural extension considering disjointness in the cartesian product associated to the coordinatewise ordering: (i) The concept of sporadic disjointness in the cartesian product proved to be useful in \cite{Kusraeva, Kusraeva2}. (ii) In \cite{vinger} one can find applications of the concept of strongly $M$-weakly compact multilinear operators. (iii) Bearing in mind the main result of Section 4, multilinear operators whose linearizations are $L$-weakly compact should also be considered. All three notions, namely, sporadic disjointness, strong $M$-weakly compactness and linearization (which will all be defined in due time) are concepts of a typical multilinear/nonlinear character, which show that they should be taken into account in our search. The result is that we have several classes of multilinear operators of $M$-weakly compact-type to work with. In Section 5 we handle all these classes and, in  Theorem \ref{Teo.M-frac.compBV}, we characterize the multilinear operators $A$ whose adjoints $A^*$ are $L$-weakly commpac and we establish the relations of such operators with the other types of $M$-weakly compact multilinear operators.

All linear spaces are supposed to be real and all ordered linear spaces are supposed to be Archimedean. If $X, X_1, \ldots, X_m$ are Banach spaces, $E, E_1, \ldots, E_m$ are Riesz spaces, then $B_X$ denotes  the closed unit ball of $X$, $E^+$ stands for the positive cone of $E$, in $X_1 \times \cdots \times X_m$ we consider the maximum norm $\|\cdot\|_\infty$ and we consider $E_1 \times \cdots \times E_n$ endowed with the coordinatewise ordering. Spaces of regular linear/multilinear operators are endowed with their corresponding regular norms. For (spaces of) continuous multilinear operators between Banach spaces we refer to \cite{mujica, dineen}, and for Riesz spaces, Banach lattices and linear operators between them we refer to \cite{invitation, Alip, MeyerPeter, Schaefer}.

\section{Adjoints of multilinear operators of order bounded variation}

For linear spaces $E_1, \ldots, E_m,F$, by $L(E_1, \ldots, E_m;F)$ we denote the linear space of all $n$-linear operators from $E_1 \times \cdots \times E_m$ to $F$. In the linear case we simply write $F'$ for the algebraic dual $L(F;\R)$ of $F$.   Following the classical approach due to Aron and Schottenloher \cite{Aron}, the (algebraic) adjoint of $A$ is defined by
$$A^*\colon F'\to L(E_1,\dots,E_m;\mathbb{R})~,~ A^*(y^*)(x_1,\dots,x_m)=y^*(A(x_1,\dots,x_m)),$$
which is a linear operator between linear spaces.

If the underlying spaces are normed spaces, then we denote by ${\cal L}(E_1, \ldots, E_m;F)$ the subspace of $L(E_1, \ldots, E_m;F)$ consisting of continuous operators and by $F^*$ the topological dual of $F$. The (topological) adjoint of a continuous operator $A \in {\cal L}(E_1, \ldots, E_m;F)$ is the restriction of the algebraic adjoint to $F^*$. In this case we have that
$$A^*\colon F^*\to \mathcal{L}(E_1,\dots,E_m;\mathbb{R})$$
is a bounded linear operator between normed spaces. 

Now we consider the case where the underlying spaces are real Riesz spaces. Denoting by $E^\sim$ the order dual of the Riesz space $E$, it is well known  that the order adjoint $T^\sim \colon F^\sim \to E^\sim$ of an order bounded linear operator $T \colon E \longrightarrow F$, defined in the obvious way, is itself an order bounded linear operator \cite[Theorem 1.73]{Alip}. In the multilinear case, for an $m$-linear $A \colon E_1 \times \cdots \times E_m \to F$ between Riesz spaces, the natural step is to define the (order) adjoint of $A$ by the formula
$$A^*(y^*)(x_1,\dots,x_m)=y^*(A(x_1,\dots,x_m)) \mbox{ for all } y^* \in F^\sim, x_1 \in E_1, \ldots, x_n \in E_m.  $$
To be coherent with the previous cases, supposing that $A$ belongs to some Riesz space of multilinear operators from $E_1 \times \cdots \times E_m$, we wish $A^*$ to be an order bounded linear operator taking values in some Riesz space of multilinear forms on $E_1 \times \cdots \times E_m$. To describe the Riesz spaces of multilinear operators/forms that fit this purpose, we need to recall some definitions and fix some notation.

Let $A \colon E_1 \times \cdots \times E_m \to F$ be an $m$-linear operator between Riesz spaces. $A$ is {\it positive} if $A(x_1, \ldots, x_n) \geq 0$ for all $0\leq x_1 \in E_1, \ldots, 0 \leq x_m \in E_m$. In this case we write $A \geq 0$. The space $L(E_1, \ldots, E_n;F)$ of $n$-linear operators from $E_1 \times \cdots \times E_m$ to $F$ is an ordered linear space with the partial order $A \leq B \Longleftrightarrow B - A \geq 0$.

$A$ is {\rm regular}, in symbols $A \in L_r(E_1, \ldots, E_n;F)$, if $A$ can be written as the difference of two positive operators. If $F$ is Dedekind complete, then $L_r(E_1, \ldots, E_n;F)$ is a Dedekind  complete Riesz space \cite[Lemma 2.12 ]{Loane}.

According to \cite{Boulabiar2, Buskes2}, $A$ has  {\it order bounded variation} if, regardless of the positive vectors $a_1 \in E_1, \ldots, a_m \in E_m$, the set consisting of the vectors
$$\sum_{i_1, \ldots, i_m = 1}^{N_1, \ldots, N_m} |A(x^1_{i_1}, \ldots, x^m_{i_m})|,$$
where $N_1, \ldots, N_m \in \mathbb{N}$, $0 \leq x^1_{i_1} \in E_1$ for $i_1 = 1, \ldots, N_1, \ldots, 0 \leq x^m_{i_m} \in E_m$ for $i_m = 1, \ldots, N_m$, $\sum\limits_{i= 1}^{N_1}x^1_i = a_1, \ldots, \sum\limits_{i=1}^{N_n}x^m_i = a_m$, is order bounded in $F$. By $L_{bv}(E_1,\dots,E_m;F)$ we denote the ordered linear subspace of $L(E_1, \ldots, E_n;F)$ consisting of all operators of order bounded variation. A weaker condition on $F$ is required for this space to be a Riesz space: If $F$ is uniformly complete (for  the definition, see \cite[Ex.\,2, p.\,110]{Alip}), then $L_{bv}(E_1,\dots,E_m;F)$ is a Riesz space \cite[Theorem 3.2]{Buskes2}. Moreover, 
if $F$ is Dedekind complete, then
\begin{equation*}L_{bv}(E_1,\dots,E_m;F) = L_{r}(E_1,\dots,E_m;F) \end{equation*}
is a Dedekind complete Riesz space \cite[Theorem 4]{Buskes} and, for any $A \in L_{bv}(E_1,\dots,E_m;F)$ and all $a_1 \in E_1^+, \ldots, a_m \in E_m^+$, it holds
\begin{equation}\label{kbo7}|A|(a_1, \ldots, a_m) = \sup \left\{\sum_{i_1, \ldots, i_m = 1}^{N_1, \ldots, N_m} |A(x^1_{i_1}, \ldots, x^m_{i_m})|\right\},\end{equation}
where the supremum is taken over all $N_1, \ldots, N_m \in \mathbb{N}$, $0 \leq x^1_{i_1} \in E_1$ for $i_1 = 1, \ldots, N_1, \ldots, 0 \leq x^m_{i_m} \in E_m$ for $i_m = 1, \ldots, N_m$, so that $\sum\limits_{i= 1}^{N_1}x^1_i = a_1, \ldots, \sum\limits_{i=1}^{N_n}x^m_i = a_m$ (see \cite[Theorem 3.2(iii)]{Buskes2} and \cite[Proposition 2.14]{Loane}).
In particular, \begin{equation}\label{pl2h}L_{bv}(E_1,\dots,E_m;\mathbb{R}) = L_{r}(E_1,\dots,E_m;\mathbb{R}) \end{equation} is a Dedekind complete Riesz space and the modulus of any regular $m$-linear form can be calculated by the formula (\ref{kbo7}).

   The main purpose of this section is to show that, if $A$ is of order bounded variation, then the order adjoint $A^*$ is a well defined order bounded linear operator from $F^\sim$ to $L_r(E_1, \ldots, E_n)$. First we prove that it is well defined.

\begin{proposition}  Let $A \colon E_1 \times \cdots \times E_m \to F$ be an $m$-linear operator of order bounded variation between Riesz spaces. Then $A^*(y^*)$ is a regular $m$-linear form for every functional $y^* \in F^\sim$. Hence,  $A^*\colon F^\sim\to L_r(E_1,\dots,E_m;\mathbb{R})$ is a well defined linear operator.
\end{proposition}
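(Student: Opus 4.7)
The plan is to use the characterization of order bounded variation together with the standard inequality $|y^*(z)| \leq |y^*|(|z|)$ valid for every $y^* \in F^\sim$ and $z \in F$, in order to transfer the order-boundedness of the relevant sums from $F$ to $\mathbb{R}$. By identity (\ref{pl2h}), $L_r(E_1,\dots,E_m;\mathbb{R}) = L_{bv}(E_1,\dots,E_m;\mathbb{R})$, so it is enough to show that $A^*(y^*)$ is of order bounded variation as an $m$-linear form on $E_1 \times \cdots \times E_m$.

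First I would note that $A^*(y^*)$ is $m$-linear by the definition, since $y^*$ is linear and $A$ is $m$-linear; and the assignment $y^* \mapsto A^*(y^*)$ is linear as well, again straight from the formula. The substantive point is to verify the order bounded variation condition for $A^*(y^*)$. So fix positive vectors $a_1 \in E_1^+, \ldots, a_m \in E_m^+$ and consider any decompositions $\sum_{i_j=1}^{N_j} x^j_{i_j} = a_j$ with $0 \leq x^j_{i_j} \in E_j$. The computation
\[
\sum_{i_1, \ldots, i_m = 1}^{N_1, \ldots, N_m} |A^*(y^*)(x^1_{i_1}, \ldots, x^m_{i_m})|
= \sum_{i_1, \ldots, i_m = 1}^{N_1, \ldots, N_m} |y^*(A(x^1_{i_1}, \ldots, x^m_{i_m}))|
\leq |y^*|\!\left(\sum_{i_1, \ldots, i_m = 1}^{N_1, \ldots, N_m} |A(x^1_{i_1}, \ldots, x^m_{i_m})|\right)
\]
then reduces the problem to a fact about $A$.

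Since $A$ has order bounded variation, the set of vectors $\sum |A(x^1_{i_1}, \ldots, x^m_{i_m})|$ (ranging over all admissible decompositions) is order bounded in $F$; being a set of positive vectors in $F$, it is bounded above by some $u \in F^+$. The functional $|y^*|$ belongs to $F^\sim$ and is positive, so applying it preserves the inequality and the above sums are all bounded above by the real number $|y^*|(u)$. This proves that $A^*(y^*)$ is of order bounded variation, and hence regular by (\ref{pl2h}), which is exactly what was needed.

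There is no real obstacle here; the only subtle point is recognizing that one wants order bounded variation (rather than regularity directly), since the latter seems hard to produce by hand while the former is the right property to combine with the inequality $|y^*(z)| \leq |y^*|(|z|)$ under positive linear decompositions of $a_1, \ldots, a_m$.
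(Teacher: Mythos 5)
Your proof is correct and follows essentially the same route as the paper's: reduce regularity to order bounded variation via (\ref{pl2h}), apply the inequality $|y^*(z)|\leq |y^*|(|z|)$ termwise, pull the sum inside $|y^*|$ by positivity and linearity, and bound everything by $|y^*|(u)$ where $u$ is an upper bound for the corresponding set of sums for $A$. Nothing is missing.
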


\begin{proof} Let $y^* \in F^\sim$ be given. It is obvious that $A^*(y^*)$ is an $m$-linear operator, that is, $A^*(y^*) \in L(E_1,\dots,E_m;\mathbb{R})$. To prove that it is regular, by (\ref{pl2h}) it is enough to show that it has order bounded variation.  To do so, let $0 \leq a_1 \in E_1, \ldots, 0\leq a_m \in E_m$ be given. We have to show that the set 
\begin{align*}
D^{y^*}_{a_1,\dots,a_m}\coloneqq\left\{\sum_{i_1, \ldots, i_m = 1}^{N_1, \ldots, N_m} \right.&|A^*(y^*)(x^1_{i_1}, \ldots, x^m_{i_m})|: N_1, \ldots, N_m \in \mathbb{N}, 0\leq x^j_{i_j} \in E_j \mbox{ for } i_j \in \{1, \ldots, N_j\}\nonumber
\\& \left.  \mbox{ and }  j \in\{1,\ldots,m\},\, \sum_{i= 1}^{N_1}x^1_i = a_1, \ldots, \sum_{i=1}^{N_n}x^m_i = a_m\right\}
\end{align*}
is order bounded. As $A$ is of order bounded variation, the set 
\begin{align} D_{a_1,\dots,a_m}\coloneqq \left\{\sum_{i_1, \ldots, i_m = 1}^{N_1, \ldots, N_m} \right.&|A(x^1_{i_1}, \ldots, x^m_{i_m})|: N_1, \ldots, N_m \in \mathbb{N}, 0\leq x^j_{i_j} \in E_j \mbox{ for } i_j \in \{1, \ldots, N_j\}\nonumber
\\& \left.  \mbox{ and }  j \in\{1,\ldots,m\},\, \sum_{i= 1}^{N_1}x^1_i = a_1, \ldots, \sum_{i=1}^{N_n}x^m_i = a_m\right\} \label{lm2x}
\end{align}
is order bounded in $F$. Let $u \in F$ be an upper bound of $D_{a_1,\dots,a_m}$. For all $N_1, \ldots,N_m$, $x^1_1,\ldots, x^1_{N_1},\ldots,$ $ x^m_1,\ldots, x^m_{N_m}$ as in definition of $D^{y^*}_{a_1,\dots,a_m}$,  we have
\begin{align*}
\sum_{i_1, \ldots, i_m = 1}^{N_1, \ldots, N_m} |A^*(y^*)(x^1_{i_1}, \ldots, x^m_{i_m})|&=\sum_{i_1, \ldots, i_m = 1}^{N_1, \ldots, N_m} |y^*(A(x^1_{i_1}, \ldots, x^m_{i_m}))|\\&\leq\sum_{i_1, \ldots, i_m = 1}^{N_1, \ldots, N_m} |y^*|(|A(x^1_{i_1}, \ldots, x^m_{i_m})|)\\&=|y^*|\left(\sum_{i_1, \ldots, i_m = 1}^{N_1, \ldots, N_m} |A(x^1_{i_1}, \ldots, x^m_{i_m})|\right)\leq|y^*|(u).
\end{align*}
So, $|y^*|(u)$ is an upper bound for $D^{y^*}_{a_1,\dots,a_m}$, proving that $A^*(y^*) \in L_r(E_1,\dots,E_m;\mathbb{R})$. The linearity of $A^*$ is immediate.
\end{proof}

Now we need a (positive) multilinear version of \cite[Theorem 1.19]{Alip}.

\begin{lemma}\label{Theo.1.19M.Alip} Let $E_1, \ldots, E_n,F$ be Riesz spaces with $F$ Dedekind complete and let $D$ be a subset of $[L_r(E_1, \ldots,E_n;F)]^+$ satisfying $D \uparrow$.  Then $\sup D$ exists in $L_{r}(E_1,\dots,E_m;F)$ if and only if the set  $\{B(x_1,\dots,x_m):B\in D\}$ is bounded above in $F$ for all $0\leq x_1\in E_1,\dots,0\leq x_m\in E_m$. In this case, $$[\sup D](x_1,\dots,x_m)=\sup\{B(x_1,\dots,x_m):B\in D\}$$ for all $0\leq x_1\in E_1,\dots,0\leq x_m\in E_m$.
\end{lemma}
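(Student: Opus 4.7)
My plan is to follow the strategy of the linear case \cite[Theorem 1.19]{Alip}, adapting each step to the $m$-linear setting and using the upward directedness of $D$ to deal with additivity.

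The forward implication is immediate: if $A = \sup D$ exists in $L_r(E_1,\dots,E_m;F)$, then $A \geq B \geq 0$ for every $B \in D$, hence $A(x_1,\dots,x_m) \geq B(x_1,\dots,x_m)$ whenever $0 \leq x_j \in E_j$, so $A(x_1,\dots,x_m)$ is an upper bound of $\{B(x_1,\dots,x_m):B\in D\}$ in $F$.

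For the converse, assume the pointwise boundedness condition and, using Dedekind completeness of $F$, define
\begin{equation*}
S(x_1,\dots,x_m) := \sup\{B(x_1,\dots,x_m):B\in D\} \quad \text{for } 0\leq x_1\in E_1,\dots, 0\leq x_m\in E_m.
\end{equation*}
Positive homogeneity of $S$ in each variable is clear. For additivity in, say, the first variable, I would fix $0\leq y,z\in E_1$ and $0\leq x_j\in E_j$ ($j=2,\dots,m$), and use that given $T_1,T_2\in D$, directedness supplies $T_3\in D$ with $T_3\geq T_1,T_2$; since each element of $D$ is positive and multilinear,
\begin{equation*}
T_1(y,x_2,\dots,x_m) + T_2(z,x_2,\dots,x_m) \leq T_3(y,x_2,\dots,x_m) + T_3(z,x_2,\dots,x_m) = T_3(y+z,x_2,\dots,x_m) \leq S(y+z,x_2,\dots,x_m),
\end{equation*}
and taking the supremum over $T_1,T_2\in D$ independently yields $S(y,\dots)+S(z,\dots)\leq S(y+z,\dots)$. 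The reverse inequality is immediate from the linearity in the first variable of each $B\in D$. The argument is symmetric in the other variables.

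Next I would extend $S$ to an $m$-linear operator on $E_1\times\cdots\times E_m$ by applying the Kantorovich-type extension one variable at a time: for each $j$, having extended $S$ to be linear in variables $1,\dots,j-1$, use the additivity and positive homogeneity on $E_j^+$ just established (fixing any choice of arguments in the remaining variables) to extend linearly in the $j$th variable via $S(\dots,u-v,\dots)=S(\dots,u,\dots)-S(\dots,v,\dots)$ for $u,v\in E_j^+$, and verify independence of the decomposition $u-v$ in the usual way. This produces $S\in L(E_1,\dots,E_m;F)$ which is positive (since $S\geq B\geq 0$ on positive arguments, for any fixed $B\in D$), and in particular $S\in L_r(E_1,\dots,E_m;F)$. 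Finally, $S\geq B$ for every $B\in D$ by construction, and if $C\in L_r(E_1,\dots,E_m;F)$ satisfies $C\geq B$ for every $B\in D$, then $C(x_1,\dots,x_m)\geq \sup_{B\in D}B(x_1,\dots,x_m)=S(x_1,\dots,x_m)$ on positive arguments, which by \cite[Theorem 4]{Buskes} (positive operators between Riesz spaces dominated on the positive cone are dominated everywhere) gives $C\geq S$. Hence $S=\sup D$, and the displayed formula for $[\sup D](x_1,\dots,x_m)$ on positive arguments is exactly the definition of $S$.

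The main obstacle will be the multilinear extension step: one must verify that iterating the Kantorovich extension really does produce a well-defined $m$-linear operator, i.e.\ that the order of extending the variables does not matter and that positivity in each variable is preserved at each stage. Everything else is a routine transcription of the linear proof, with directedness of $D$ taking care of the additivity of $S$ on the positive cone.
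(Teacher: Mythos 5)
Your proof is correct and follows essentially the same route as the paper: pointwise supremum on the positive cone, additivity in each variable via the directedness of $D$, a Kantorovich-type extension to an $m$-linear operator, and verification that the extension is the supremum. The ``main obstacle'' you flag is dispatched in the paper simply by citing the multilinear Kantorovich extension theorem \cite[Theorem 2.3]{Loane}, and your appeal to \cite[Theorem 4]{Buskes} in the final step is unnecessary, since $C\geq S$ means by definition that $C-S$ is positive on positive arguments.
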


\begin{proof} Suppose first that $S:=\sup D$ exists in $L_{r}(E_1,\dots,E_m;F)$. In this case, $B(x_1,\dots,x_m)\leq S(x_1,\dots,x_m)$ for all $0\leq x_1\in E_1,\dots,0\leq x_m\in E_m$, hence the set $\{B(x_1,\dots,x_m):B\in D\}$ is bounded above in $F$. 

Assume now that 
 the set $\{B(x_1,\dots,x_m):B\in D\}$ is bounded above in $F$ for all $0\leq x_1\in E_1,\dots,0\leq x_m\in E_m$. The operators in $D$ are all positive, so this set is contained in $F^+$.  As $F$ is Dedekind complete, the map $S\colon {E_1}^+\times\cdots\times {E_m}^+\longrightarrow F^+$ given  by $$S(x_1,\dots,x_m)=\sup\{B(x_1,\dots,x_m):B\in D\}$$
  is well defined. Let us see that $S$ is additive in the first variable. Fix 
  $a_2\in {E_2}^+,\dots,a_m\in {E_m}^+$, and consider the map $$S_1\colon {E_1}^+\longrightarrow F^+~,~ S_1(x_1)=S(x_1,a_2,\dots,a_m).$$
  For any $B\in D$ and all $x_1,y_1\in {E_1}^+$,  $$B(x_1+y_1,a_2,\dots,a_m)=B(x_1,a_2,\dots,a_m)+B(y_1,a_2,\dots,a_m)\leq S_1(x_1)+S_1(y_1).$$
  So, $S_1(x_1+y_1)\leq S_1(x_1)+S_1(y_1)$. Given $B,C\in D$, since $D \uparrow$ there exists $T\in D$ such that $B\vee C\leq T$. Thus, for all $x_1,y_1\in {E_1}^+$,
\begin{align*}
B(x_1,a_2,\dots,a_m)+C(y_1,a_2,\dots,a_m)&\leq T(x_1,a_2,\dots,a_m)+T(y_1,a_2,\dots,a_m)\\&=T(x_1+y_1,a_2,\dots,a_m)\leq S_1(x_1+y_1).
\end{align*}
It follows that $S_1(x_1)+S_1(y_1)\leq S_1(x_1+y_1)$, hence $S_1(x_1+y_1)=S_1(x_1)+S_1(y_1)$. This proves that $S$ is additive in the first variable. The same argument gives additivity in the other variables. By Kantorovich's multilinear extension theorem \cite[Theorem 2.3]{Loane}, $S$ admits a unique positive $m$-linear extension $\overline{S}\colon  E_1\times\cdots\times E_m\longrightarrow F$. All that is left to prove is that $\overline{S}=\sup D$. Of course, $B\leq\overline{S}$ for every $B\in D$. Let $U\in L_{r}(E_1,\dots,E_m;F)$ be such that $B\leq U 
$ for every $B\in D$. Then
$$ \overline{S}(x_1,\dots,x_m) =S(x_1,\dots,x_m)=\sup\{B(x_1,\dots,x_m):B\in D\}\leq U(x_1,\dots,x_m)$$
for all $0\leq x_1\in E_1,\dots,0\leq x_m\in E_m$, which proves that  $\overline{S}=\sup D$. The second assertion is now obvious.
\end{proof}

Recall that a net $(x_\alpha)_{\alpha \in \Gamma}$ in a Riesz space $E$ is {\it order convergent} to  $0$, in symbols $x_\alpha\stackrel{o}{\longrightarrow}0$, if there is a  net $(y_\alpha)_{\alpha \in \Gamma}$ in $E$ such that $y_\alpha \downarrow 0$ and   $|x_\alpha|\leq y_\alpha$ for every $\alpha \in \Gamma$. Furthermore, a linear operator $T\colon E\longrightarrow F$ between Riesz spaces is said to be {\it order continuous} if $T(x_\alpha)\stackrel{o}{\longrightarrow}0$ in $F$ whenever $x_\alpha\stackrel{o}{\longrightarrow}0$ in $E$.


It is worth mentioning that a positive linear operator $T\colon E\longrightarrow F$ between Riesz spaces is order continuous if and only if $T(x_\alpha)\downarrow0$ in $F$ whenever $x_\alpha\downarrow0$ in $E$ \cite[p.\,46]{Alip}. Moreover, order continuous operators are order bounded \cite[Lemma 1.54]{Alip}, but the converse does not hold \cite[Example 1.55]{Alip}.

Next we accomplish the main purpose of this section by proving a multilinear version of \cite[Theorem 1.73]{Alip}.

\begin{theorem}\label{Theo.1.73M} Let $A \colon E_1 \times \cdots \times E_m \to F$ be an $m$-linear operator of order bounded variation between Riesz spaces. Then $A^*\colon F^\sim\to L_r(E_1,\dots,E_m;\mathbb{R})$ is an order bounded, hence regular, and order continuous linear operator. In particular, $|A^*|$ exists in $L_r(F^\sim; L_r(E_1,\dots,E_m;\mathbb{R}))$.
\end{theorem}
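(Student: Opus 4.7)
The plan is to construct the modulus $|A^*|$ directly via the Riesz--Kantorovich sum formula, using Lemma~\ref{Theo.1.19M.Alip} to guarantee that the required suprema live in $L_r(E_1,\dots,E_m;\R)$, which is Dedekind complete by~\eqref{pl2h}. Once $|A^*|$ is produced, order boundedness and regularity of $A^*$ are immediate, and the existence of $|A^*|$ inside $L_r(F^\sim;L_r(E_1,\dots,E_m;\R))$ follows automatically from Dedekind completeness of the target.

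The central construction goes as follows. For $y^*\in(F^\sim)^+$ put
\[
D(y^*)=\Bigl\{\textstyle\sum_{i=1}^{n}|A^*(z_i^*)|:n\in\N,\ 0\le z_i^*\in F^\sim,\ \sum_{i=1}^{n}z_i^*=y^*\Bigr\}\subseteq L_r(E_1,\dots,E_m;\R)^+.
\]
Refinement of decompositions of $y^*$, together with $|A^*(z^*+w^*)|\le|A^*(z^*)|+|A^*(w^*)|$, forces $D(y^*)\uparrow$. For the pointwise bound needed in Lemma~\ref{Theo.1.19M.Alip}, fix positive $a_1,\dots,a_m$ and an upper bound $u\in F^+$ of the set $D_{a_1,\dots,a_m}$ from~\eqref{lm2x}. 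The computation of the preceding proposition, applied with $|z^*|=z^*\le y^*$, yields $|A^*(z^*)|(a_1,\dots,a_m)\le z^*(u)$, hence $\sum_i|A^*(z_i^*)|(a_1,\dots,a_m)\le y^*(u)$. Lemma~\ref{Theo.1.19M.Alip} delivers $|A^*|(y^*):=\sup D(y^*)\in L_r(E_1,\dots,E_m;\R)^+$. Additivity and positive homogeneity of $|A^*|$ on $(F^\sim)^+$ follow from the Riesz decomposition property in the Dedekind complete Riesz space $F^\sim$, which lets any decomposition of $y^*+y'^*$ be split into compatible decompositions of $y^*$ and $y'^*$; by a Kantorovich-type extension, $|A^*|$ extends to a positive linear operator $F^\sim\to L_r(E_1,\dots,E_m;\R)$. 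The bound $|A^*(z^*)|\le|A^*|(y^*)$ for every $|z^*|\le y^*$ is built in, so in particular $\pm A^*(y^*)\le|A^*|(y^*)$, making $A^*_\pm:=(|A^*|\pm A^*)/2$ positive and $A^*=A^*_+-A^*_-$ regular, hence order bounded, with modulus $|A^*|$.

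For order continuity, the inequality $|A^*(y^*)|\le|A^*|(|y^*|)$ reduces the statement to order continuity of the positive operator $|A^*|$, which, since $|A^*|$ is positive, is equivalent to $|A^*|(y^*_\alpha)\downarrow 0$ in $L_r$ whenever $y^*_\alpha\downarrow 0$ in $F^\sim$. Given such a net, for any positive $(a_1,\dots,a_m)$ and $u$ as above the bound from the previous paragraph yields $|A^*|(y^*_\alpha)(a_1,\dots,a_m)\le y^*_\alpha(u)\downarrow 0$; combined with the monotonicity $|A^*|(y^*_\alpha)\downarrow$ in $L_r$, any positive lower bound $C\in L_r$ of this net satisfies $C(a_1,\dots,a_m)=0$ for all positive inputs, so $C=0$. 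The main technical obstacle I anticipate is the additivity of $|A^*|$ on $(F^\sim)^+$, which hinges on combining Riesz decompositions in $F^\sim$ with the triangle inequality for the modulus; the remainder is a straightforward application of Lemma~\ref{Theo.1.19M.Alip}, Dedekind completeness of $L_r(E_1,\dots,E_m;\R)$, and the preceding proposition.
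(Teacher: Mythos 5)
Your proposal is correct and follows essentially the same route as the paper: the same set $D(y^*)$ of sums over positive decompositions, the same Riesz-decomposition argument for $D(y^*)\uparrow$, the same pointwise bound $\le y^*(u)$ fed into Lemma~\ref{Theo.1.19M.Alip}, and the same order-continuity argument via $|A^*|(y^*_\alpha)(a_1,\dots,a_m)\le y^*_\alpha(u)\downarrow 0$. The only (cosmetic) difference is that you build $|A^*|$ first by Kantorovich extension and deduce order boundedness from it, whereas the paper first gets order boundedness from $A^*([0,y^*])\subset[-\sup D,\sup D]$ and then identifies $|A^*|(y^*)=\sup D$ via the Riesz--Kantorovich formula.
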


\begin{proof} To show that $A^*$ order bounded, let $0\leq y^*\in F^\sim$ be given and consider the set
$$D\coloneqq\left\{\sum_{i = 1}^{N}|A^*(y^*_i)|: N \in \mathbb{N},\, y^*_1, \ldots, y^*_N \in (F^\sim)^+,\, \sum_{i= 1}^{N}y^*_{i} = y^*\right\}.$$
Let us see that $D\uparrow$ in $L_{bv}(E_1,\dots,E_m;F)$. Let  $y^*_1,\dots,y^*_N,z^*_1,\dots,z^*_M$ be positive functionals in $F^\sim$ such that $$\sum_{i=1}^N y^*_i=\sum_{j=1}^M z^*_j=y^*.$$ From the Riesz decomposition property \cite[Theorem 1.20]{Alip} there exist $0\leq w^*_{ij}\in F^\sim$ with $i=1,\dots,N$ and $j=1,\dots,M$, such that $y^*_i=\sum\limits_{j=1}^M w^*_{ij}$ and $z^*_j=\sum\limits_{i=1}^N w^*_{ij}$ for all $i$ and $j$. It follows that $\sum\limits_{i,j=1}^{N,M} w_{ij}=y^*$. We have
   $$\sum_{i = 1}^{N} |A^*(y^*_{i})|=\sum_{i = 1}^{N}\left|\sum_{j = 1}^{M} A^*(w^*_{ij})\right|\leq\sum_{i,j=1}^{N,M} |A^*(w_{ij})|$$
and, analogously, $\sum\limits_{j = 1}^{M} |A^*(z^*_{i})|\leq\sum\limits_{i,j=1}^{N,M} |A^*(w_{ij})|$, showing that $D\uparrow$.

Next we shall check that $S:=\sup D$ exists in $L_{bv}(E_1,\dots,E_m;\R)$. To accomplish this task, fix $0\leq a_1\in E_1,\dots,0\leq a_m\in E_m$. As $A$ is of order bounded variation, there exists $0\leq u\in F$ so that the set $D_{a_1, \ldots, a_m}$ defined in (\ref{lm2x}) is contained in 
the order interval $[0,u]$. We shall say that $(\Delta)$ holds whenever $N_1, \ldots, N_m \in \mathbb{N}$, $0 \leq x^1_{i_1} \in E_1$ for $i_1 = 1, \ldots, N_1, \ldots, 0 \leq x^m_{i_m} \in E_m$ for $i_m = 1, \ldots, N_m$, are so that $\sum\limits_{i= 1}^{N_1}x^1_i = a_1, \ldots, \sum\limits_{i=1}^{N_n}x^m_i = a_m$.  If $y^*_1,\dots,y^*_N$ are positive functionals in $F^\sim$ with $\sum\limits_{k=1}^{N} y^*_k=y^*$, then
\begin{align*}
\left[\sum_{k = 1}^{N} |A^*(y^*_{k})|\right](a_1,\dots,a_m)&=\sum_{k = 1}^{N} |A^*(y^*_{k})|(a_1,\dots,a_m)\\
&\stackrel{\rm (\ref{kbo7})}{=}\sum_{i = 1}^{N}\sup\left\{\sum_{i_1, \ldots, i_m = 1}^{N_1, \ldots, N_m} |A^*(y^*_k)(x^i_1, \ldots, x^i_m)|: (\Delta) \mbox{ holds}\right\}
 \\&=\sum_{i = 1}^{N}\sup\left\{\sum_{i_1, \ldots, i_m = 1}^{N_1, \ldots, N_m} |y^*_k(A(x^i_1, \ldots, x^i_m))|: (\Delta) \mbox{ holds}\right\} 
 \\&\leq\sum_{i = 1}^{N}\sup\left\{y^*_k\left(\sum_{i_1, \ldots, i_m = 1}^{N_1, \ldots, N_m} |A(x_i^1, \ldots, x_i^m|  \right): (\Delta) \mbox{ holds}\right\} 
 \\&\leq \sum_{i = 1}^{N}y^*_k(u)\leq y^*(u).
\end{align*}
This shows that the set $\{B(a_1,\dots,a_m):B\in D\}$ is bounded in $\R$ for all $0\leq a_1\in E_1,\dots,0\leq a_m\in E_m$. As $L_{bv}(E_1,\dots,E_m;\R)$ is Dedekind complete, the supremum $S:=\sup D$ exists in $L_{bv}(E_1,\dots,E_m;\R)$ by Lemma \ref{Theo.1.19M.Alip}.

For any $0\leq z^*\leq y^*$, the sum $|A^*(z^*)|+|A^*(y^*-z^*)|$ is an element of $D$, therefore  $$|A^*(z^*)|\leq|A^*(z^*)|+|A^*(y^*-z^*)|\leq S.$$ This proves that $A^*([0,y^*])\subset[-S,S]$, hence $A^*$ is order bounded. As $L_r(E_1, \ldots, E_m;\mathbb{R})$ is Dedekind complete, $A^*$ is regular. $|A^*|$ exists since $A^*$ is an order bounded operator in $L_r(F^\sim; L_r(E_1,\dots,E_m;\mathbb{R}))$, which is a Riesz space because $L_r(E_1,\dots,E_m;\mathbb{R})$ is Dedekind complete.

     To check that $A^*$ is order continuous, let $0\leq a_1\in E_1,\dots,0\leq a_m\in E_m$ be fixed. Again, let $0\leq u\in F$ be an upper bound of the set $D_{a_1,\ldots, a_m}$ defined in (\ref{lm2x}). 
     Given $0 \leq y^* \in F^\sim$, above we proved that 
     \begin{equation}\label{ol2v}\left[\sum_{k = 1}^{N} |A^*(y^*_{k})|\right](a_1,\dots,a_m)\leq y^*(u)\end{equation}
     for every finite family  $y^*_1,\dots,y^*_N$ of positive linear functionals in $F^\sim$ with $\sum\limits_{k=1}^{N} y^*_k=y^*$. It follows from \cite[Theorem 1.21]{Alip} that
     $$\left\{\sum_{k = 1}^{N} |A^*(y^*_{k})|: N\in \mathbb{N}, 0\leq y_k^* \in F^\sim, \sum_{k^=1}^n y_k^* = y^* \right\} \uparrow |A^*|(y^*).  $$
By Lemma \ref{Theo.1.19M.Alip},
$$|A^*|(y^*)(a_1, \ldots, a_m) = \sup\left\{\left[\sum_{k = 1}^{N} |A^*(y^*_{k})|\right](a_1,\dots,a_m):N\in \mathbb{N}, 0\leq y_k^* \in F^\sim, \sum_{k^=1}^n y_k^* = y^*   \right\}. $$
Calling on (\ref{ol2v}) it follows that $|A^*|(y^*)(a_1,\dots,a_m)\leq y^*(u)$ for each $0\leq y^*\in F^\sim$. In particular, for every net $(y_\alpha^*)_\alpha$ in $F^\sim$ with $y^*_\alpha\downarrow0$,
     $$|A^*|(y^*_\alpha)(a_1,\dots,a_m)\leq y^*_\alpha(u)\downarrow0.$$
     It follows that $|A^*|(y^*_\alpha)(a_1,\dots,a_m)\downarrow0$ in $\R$ whenever $y^*_\alpha\downarrow0$ in $F^\sim$. Since this holds for all  $0\leq a_1\in E_1,\dots,0\leq a_m\in E_m$, we have $|A^*|(y^*_\alpha)\downarrow0$  in $L_{bv}(E_1,\dots,E_m;\R)$ whenever $y^*_\alpha\downarrow0$ in $F^\sim$. This proves that $|A^*|$ is order continuous, therefore  $A^*$ is order continuous as well \cite[Theorem 1.56]{Alip}. 
\end{proof}

For Riesz spaces $E_1, \ldots, E_m, F$ with $F$ Dedekind complete, every multilinear operator $A $ in $L_{bv}(E_1,\dots,E_m;F)$ has an absolute value $|A| \in L_{bv}(E_1,\dots,E_m;F)$ (see \cite{Buskes}). On the one hand, it is easy to check that $|A^*|\leq |A|^*$; on the other hand, equality does not hold even in the linear case (see \cite[Example 3.2]{khazhak}). We shall finish this section showing that $|A^*|$ and $|A|^*$ coincide on order continuous functionals in $F^\sim$. First we establish an inequality that does not require $F$ to be Dedekind complete.

\begin{lemma} \label{lemma1.75M}
Let $E_1, \ldots, E_m, F$ be Riesz spaces. For every $A \in L_{bv}(E_1,\dots,E_m;F)$ and all $0\leq y^*\in F^\sim$, $0\leq x_1\in E_1,\dots,0\leq x_m\in E_m$, it holds $$y^*(|A(x_1,\dots,x_m)|)\leq|A^*|(y^*)(x_1,\dots,x_m).$$
\end{lemma}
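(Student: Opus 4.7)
The plan is to mirror the classical linear argument (cf.\ \cite[Lemma 1.75]{Alip}): use Hahn--Banach in $F$ to produce a ``sign'' functional $z^* \in F^\sim$ that is dominated by $y^*$ in the lattice sense and that realises $|u|$ via $y^*$, where $u := A(x_1,\dots,x_m) \in F$; then chain elementary modulus inequalities for the order bounded operator $A^* \colon F^\sim \to L_r(E_1,\dots,E_m;\mathbb{R})$ furnished by Theorem \ref{Theo.1.73M}.

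For the Hahn--Banach step, the functional $p \colon F \to \mathbb{R}$ defined by $p(v) := y^*(|v|)$ is sublinear, by $|v_1 + v_2| \leq |v_1| + |v_2|$ and the positivity of $y^*$. On the one-dimensional subspace $\mathbb{R}u$, I would define $\phi(tu) := t\,y^*(|u|)$. A direct check gives $\phi \leq p$ on $\mathbb{R}u$ (equality when $t \geq 0$, the left-hand side being negative when $t<0$). Hahn--Banach extends $\phi$ to a linear functional $z^* \colon F \to \mathbb{R}$ with $z^*(v) \leq y^*(|v|)$ for every $v \in F$. Replacing $v$ by $-v$ yields $|z^*(v)| \leq y^*(|v|)$, which shows $z^* \in F^\sim$ and $|z^*| \leq y^*$ in the order of $F^\sim$. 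By construction, $z^*(u) = y^*(|u|)$.

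For the modulus chain and evaluation, note first that $A^*(z^*) \in L_r(E_1,\dots,E_m;\mathbb{R})$ by the opening proposition of the section. Writing $z^* = z^{*+} - z^{*-}$ and using that $|A^*| \geq \pm A^*$ forces $|A^*(w^*)| \leq |A^*|(w^*)$ in $L_r(E_1,\dots,E_m;\mathbb{R})$ for every $w^* \in (F^\sim)^+$, one obtains in the Riesz space $L_r(E_1,\dots,E_m;\mathbb{R})$ that
\begin{equation*}
|A^*(z^*)| \leq |A^*(z^{*+})| + |A^*(z^{*-})| \leq |A^*|(z^{*+}) + |A^*|(z^{*-}) = |A^*|(|z^*|) \leq |A^*|(y^*),
\end{equation*}
the last step using $|z^*| \leq y^*$ and the positivity of $|A^*|$. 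Plugging in $(x_1,\dots,x_m)$, the trivial one-term case of formula (\ref{kbo7}) gives $|B|(x_1,\dots,x_m) \geq |B(x_1,\dots,x_m)|$ for any regular $m$-linear form $B$, and inequalities in $L_r$ evaluated at a positive tuple translate to numerical inequalities. Therefore
\begin{equation*}
y^*(|u|) = z^*(u) = A^*(z^*)(x_1,\dots,x_m) \leq |A^*(z^*)|(x_1,\dots,x_m) \leq |A^*|(y^*)(x_1,\dots,x_m),
\end{equation*}
as required.

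The only genuinely non-routine move is the Hahn--Banach extension producing $z^*$; the rest is bookkeeping with the modulus of an order bounded operator whose codomain is Dedekind complete, which is why the multilinear setting introduces no new obstacle beyond keeping careful track of which absolute value is meant at each stage (the modulus $|B|$ of a regular $m$-linear form computed inside $L_r$, as opposed to the real-number modulus $|B(x_1,\dots,x_m)|$ obtained after evaluation).
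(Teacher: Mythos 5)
Your proof is correct and follows essentially the same route as the paper: produce $z^*\in F^\sim$ with $|z^*|\leq y^*$ and $z^*(A(x))=y^*(|A(x)|)$, then chain the modulus inequalities for the order bounded operator $A^*$ from Theorem \ref{Theo.1.73M}. The only difference is that the paper simply cites \cite[Theorem 1.23]{Alip} for the existence of $z^*$, whereas you re-derive that fact via Hahn--Banach; the remaining steps coincide.
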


\begin{proof} Given $0\leq y^*\in F^\sim$ and $0\leq x_1\in E_1,\dots,0\leq x_m\in E_m$, write $x=(x_1,\dots,x_m)$. By \cite[Theorem 1.23]{Alip} there exists $z^*\in F^\sim$ such that $|z^*|\leq y^*$ and $y^*(|A(x)|)=z^*(A(x))$. Then, $$y^*(|A(x)|)= z^*(A(x)) =A^*(z^*)(x)\leq|A^*|(|z^*|)(x)\leq|A^*|(y^*)(x).$$
\end{proof}

Now we can prove a multilinear version of the Krengel-Synnatzchke Theorem \cite[Theorem 1.76]{Alip}.

\begin{proposition} Let $E_1, \ldots, E_m, F$ be Riesz spaces with $F$ Dedekind complete and let $A \in L_{bv}(E_1,\dots,E_m;F)$ be given. Then, $|A^*|(y^*)=|A|^*(y^*)$ for every order continuous functional $y^*\in F^\sim$.
\end{proposition}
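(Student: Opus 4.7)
The plan is to reduce to checking that $|A^*|(y^*)(x_1,\ldots,x_m)=|A|^*(y^*)(x_1,\ldots,x_m)$ for every $(x_1,\ldots,x_m)\in E_1^+\times\cdots\times E_m^+$ and every positive $\sigma$-order continuous $y^*\in F^\sim$, and then to prove the two inequalities separately. The reduction to positive $y^*$ is allowed because the $\sigma$-order continuous functionals form a Riesz subspace of $F^\sim$, and testing on positive tuples suffices because $|A^*|(y^*)$ and $|A|^*(y^*)$ are regular $m$-linear forms.

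For the easy direction $|A^*|(y^*)\leq|A|^*(y^*)$ (which the text flags as straightforward, and which actually holds for all $y^*\geq 0$ without the $\sigma$-order continuity hypothesis), I would chain the elementary estimate $|y^*_k(A(\cdot))|\leq y^*_k(|A(\cdot)|)$ valid for $y^*_k\geq 0$ with formula (\ref{kbo7}) applied first to $|A^*(y^*_k)|$ and then to $|A|$, and finish by taking the supremum over partitions $y^*=\sum_{k=1}^N y^*_k$ using the characterization of $|A^*|(y^*)$ established in the proof of Theorem~\ref{Theo.1.73M}.

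For the reverse inequality, I would apply Lemma~\ref{lemma1.75M} termwise to a partition $(x^j_{i_j})$ of $(x_1,\ldots,x_m)$ and sum; combining with the $m$-linearity of the regular form $|A^*|(y^*)$ on positive vectors produces $y^*(d)\leq|A^*|(y^*)(x_1,\ldots,x_m)$ for every $d\in D_{x_1,\ldots,x_m}$ (the set from (\ref{lm2x})). Since $D_{x_1,\ldots,x_m}\uparrow|A|(x_1,\ldots,x_m)$ by (\ref{kbo7}), the proof reduces to commuting $y^*$ with this directed supremum, i.e., to establishing $y^*(|A|(x_1,\ldots,x_m))=\sup\{y^*(d):d\in D_{x_1,\ldots,x_m}\}$.

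The main obstacle is precisely this commutation: $\sigma$-order continuity of $y^*$ handles only sequential order convergence, while $D_{x_1,\ldots,x_m}$ is indexed by a generally uncountable family of partitions. The plan to overcome it, imitating the final step of \cite[Theorem~1.75]{Alip}, is to exploit the upward directedness of $D_{x_1,\ldots,x_m}$ in $F$ to extract an increasing sequence $(d_n)\subset D_{x_1,\ldots,x_m}$ for which $y^*(d_n)$ increases to $\sup\{y^*(d):d\in D_{x_1,\ldots,x_m}\}$, apply $\sigma$-order continuity of $y^*$ along $(d_n)$ to pass $y^*$ across $\sup_n d_n$, and then verify that this sequential supremum recovers the full value $y^*(|A|(x_1,\ldots,x_m))$; the delicate point is the last verification, where the joint use of (\ref{kbo7}) and the positivity of $y^*$ is crucial.
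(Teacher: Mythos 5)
Your overall route coincides with the paper's: reduce to positive $\sigma$-order continuous $y^*$ (the paper writes $y^*=|y^*|-(|y^*|-y^*)$ and uses that both pieces are positive and $\sigma$-order continuous, then the linearity of $|A^*|$ and $|A|^*$), dispose of $|A^*|(y^*)\leq|A|^*(y^*)$ as the inequality that holds in general, and obtain the reverse inequality by combining formula (\ref{kbo7}) with Lemma \ref{lemma1.75M} applied termwise to a partition. You have also correctly isolated the only nontrivial point: the identity $y^*\bigl(\sup D_{x_1,\dots,x_m}\bigr)=\sup\{y^*(d):d\in D_{x_1,\dots,x_m}\}$ for the upward directed set of partition sums.

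The problem is that your plan for that point does not close it. Extracting an increasing sequence $(d_n)$ in $D_{x_1,\dots,x_m}$ with $y^*(d_n)\uparrow s:=\sup\{y^*(d):d\in D_{x_1,\dots,x_m}\}$ and applying $\sigma$-order continuity along it only yields $y^*(e)=s$ for $e:=\sup_n d_n$, and since $e\leq |A|(x_1,\dots,x_m)$, positivity of $y^*$ gives $s=y^*(e)\leq y^*(|A|(x_1,\dots,x_m))$ --- the inequality you already have, and the opposite of the one you need. Nothing in the proposal produces $y^*(|A|(x_1,\dots,x_m))\leq y^*(e)$, and $e$ may well be strictly below $\sup D_{x_1,\dots,x_m}$, because the partition family is uncountable and need not admit a cofinal increasing sequence; so the ``last verification'' you defer is not a routine check but the entire content of the step, and the hint that ``(\ref{kbo7}) and positivity of $y^*$'' will supply it points in the wrong direction. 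The paper instead passes $y^*$ through the supremum of the directed set in one stroke, exactly as in its linear antecedent \cite[Theorem 1.75]{Alip}, using $\sigma$-order continuity on the directed family itself rather than on an extracted sequence. To complete your argument you would have to justify that exchange for these particular partition sets (or simply invoke the mechanism of the linear case); the sequential detour by itself does not do it.
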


\begin{proof} We shall prove the bilinear case $m=2$, the general case follows the same argument. Let $A \in L_{bv}(E_1,E_2;F)$ be given and let $y^*\in F^\sim$ be an arbitrary order continuous positive functional. As mentioned above, $|A^*|(y^*) \leq |A|^*(y^*)$ holds. To prove the reverse inequality, let $0\leq a\in E_1$ and $0\leq b\in E_2$ be given. We shall say that $(\Gamma)$ holds whenever $N,M \in \mathbb{N}$, $a_1,\ldots, a_N \in E_1^+, b_1, \ldots, b_M \in E_2^+$ are so that $\sum\limits_{i=1}^N a_i = a, \sum\limits_{j=1}^M b_j = b$. We have
\begin{align*}
[A|^*(y^*)(a,b)&=y^*(|A|(a,b))\\
			   &=y^*\left(\sup\left\{ \sum_{i, j = 1}^{N,M}|A(a_{i},b_{j})| : (\Gamma) \mbox{ holds}\right\}\right)\\ 
			   &=\sup\left\{ \sum_{i, j = 1}^{N,M}y^*\left(|A(a_{i},b_{j})|\right) :(\Gamma) \mbox{ holds}\right\}\\ 
			   &\leq\sup\left\{ \sum_{i, j = 1}^{N,M}[|A^*|(y^*)](a_i,b_j) :(\Gamma) \mbox{ holds}\right\}\\
			   &=|A^*|(y^*)(a,b),
\end{align*}
where the second and the last equalities follow from (\ref{kbo7}), the third holds because $y^*$ is order continuous, and the inequality follows from Lemma \ref{lemma1.75M}. Thus, $|A|^*(y^*) \leq |A^*|(y^*)$, hence $|A^*|(y^*) = |A|^*(y^*)$, for every positive order continuous functional $y^*\in F^\sim$. For the case of an arbitrary regular order continuous $y^*\in F^\sim$, note  that both $|y^*|$ and $|y^*|-y^*$ are positive order continuous functionals in $F^\sim$ \cite[Theorem 1.56]{Alip}. From what we have just proved for positive order continuous functionals, it follows that
\begin{align*}
|A^*|(y^*)&=|A^*|(|y^*|-(|y^*|-y^*))=|A^*|(|y^*|)-|A^*|(|y^*|-y^*)\\&=|A|^*(y^*)-|A|^*(|y^*|-y^*)=|A|^*(y^*)-|A|^*(|y^*|)+|A|^*(y^*)=|A|^*(y^*),
\end{align*}
which completes the proof.
\end{proof}

Combining the proposition above with \cite[Theorem 2.26 and Corollary 2.27]{invitation} we obtain the following.

\begin{corollary} Let $E_1, \ldots, E_m$ be Riesz spaces and let $F$ be a Banach lattice with order continuous norm. Then $|A^*|=|A|^*$ for every  $A \in L_{bv}(E_1,\dots,E_m;F)$.  
\end{corollary}

\section{Continuity of operators of order bounded variation}

For further use, we shall prove in this section that multilinear operators of order bounded variation are continuous.

It is well known, and widely used, that order bounded linear operators from a Banach lattice to a normed Riesz space are bounded. For instance, this result is stated in \cite[Theorem 1.31]{invitation}. We shall give two short reasonings for the benefit of the reader:

\begin{lemma}\label{lemaob-cont} If $E$ is a  Banach lattice and $F$ is a normed Riesz space, then every order bounded linear operator from $E$ to $F$ is bounded.
\end{lemma}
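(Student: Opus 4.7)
My plan is to prove the lemma by contradiction, exploiting the completeness of $E$ to produce a single order interval of $E$ on which $T$ fails to be norm bounded. The only feature of $F$ I would need is that its norm is a lattice norm, i.e.\ $|u|\leq|v|$ in $F$ implies $\|u\|\leq\|v\|$. From this it follows at once that every order bounded subset of $F$ is norm bounded, because $-w\leq u\leq w$ forces $|u|\leq w$ and hence $\|u\|\leq\|w\|$. This preliminary observation will be the only link between the order structure and the norm on $F$ used in the argument.

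Suppose, towards a contradiction, that $T\colon E\to F$ is order bounded but not norm bounded. I would then choose a sequence $(x_n)$ in the closed unit ball $B_E$ with $\|T(x_n)\|\geq n^3$ for every $n\in\N$. Since $E$ is a Banach lattice and $\sum_n \|x_n\|/n^2\leq\sum_n 1/n^2<\infty$, the series $\sum_n |x_n|/n^2$ converges in norm to some $y\in E^+$; positivity of $y$ uses that the positive cone of a normed Riesz space is norm closed, which itself is immediate from the continuity of the lattice operations in a Riesz norm. The partial sums are increasing and each partial sum dominates each of its summands, so passage to the norm limit (closedness of the cone again) yields $|x_n|/n^2\leq y$, hence $x_n/n^2\in[-y,y]$ for all $n$. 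By order boundedness of $T$, there exists $v\in F^+$ with $T([-y,y])\subseteq[-v,v]$, and the preliminary observation then gives $\|T(x_n/n^2)\|\leq\|v\|$, that is, $\|T(x_n)\|\leq n^2\|v\|$ for all $n$. This contradicts $\|T(x_n)\|\geq n^3$ for large $n$, completing the proof.

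The argument is entirely standard and I do not anticipate any serious obstacle; the only point requiring mild care is the transfer of the inequalities $|x_n|/n^2\leq s_N$ (where $s_N$ is the $N$-th partial sum) to the limit $y$, which relies on norm closedness of the positive cone. Completeness of $E$ enters exactly once, in ensuring that the dominating series converges; without it the construction of the single order bound $y$ would fail, which is why the hypothesis that $E$ is a Banach lattice, not merely a normed Riesz space, cannot be dropped. As a second, alternative reasoning one could instead reduce to the classical theorem that positive linear operators on a Banach lattice are continuous, but that route is less direct and would in general require further structure on $F$, so the absorbing-order-interval argument above is the cleanest path I would use.
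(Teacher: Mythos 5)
Your proof is correct, and it is a genuinely different route from the two arguments given in the paper. You reprove the result from scratch by the classical ``absorbing order interval'' contradiction: completeness of $E$ lets you sum $\sum_n |x_n|/n^2$ into a single positive element $y$ whose order interval $[-y,y]$ contains all the scaled $x_n/n^2$, and then the lattice norm on $F$ converts the order bound on $T([-y,y])$ into the norm bound $\|T(x_n)\|\leq n^2\|v\|$ that kills the assumption $\|T(x_n)\|\geq n^3$. All the delicate points are handled: the transfer of $|x_n|/n^2\leq s_N$ to the limit via norm closedness of the positive cone, and the replacement of a general order bound $[u,w]$ by a symmetric one $[-v,v]$ is harmless. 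The paper instead gives two short reductions to known results: the first composes $T$ with the embedding of $F$ into its Dedekind completion $F^\delta$ and invokes the fact that order bounded operators into a Dedekind complete Banach lattice are regular, hence bounded; the second invokes a quantitative result of Wickstead producing a uniform constant $C$ with $T([-x,x])\subseteq[-y,y]$ and $\|y\|\leq C\|x\|$. Your argument is more self-contained (it is essentially the textbook proof of the result the paper cites as known) at the cost of being longer, whereas the paper's reasonings are shorter but lean on heavier external machinery; your closing remark that a direct reduction to the continuity of positive operators would require extra structure on $F$ is exactly the point addressed by the paper's first proof, which supplies that structure by passing to $F^\delta$.
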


\begin{proof} Let $T \colon E \to F$ be an order bounded linear operator.

  First proof. By $F^\delta$ we denote the Dedekind completion of $F$, which is a Dedekind complete Banach lattice containing an isometric lattice copy of $F$ \cite[Ex.\,21, p.\,26]{invitation}. In particular, there is an isometric embedding $i \colon F \to F^\delta$ which is also a Riesz isomorphism onto its range. Since $T$ is order bounded and $i$ is positive, $i \circ T \colon E \to F^\delta$ is an order bounded linear operator taking values in a Dedekind complete Banach lattice. By \cite[Theorem 1.18]{Alip}, $i \circ T$ is regular, hence bounded, therefore there is $C > 0$ so that $\|T(x)\| = \|i(T(x))\| \leq C \|x\|$ for every $ x \in E. $

   Second proof. By \cite[Proposition 3.1]{Wickstead} there is $C > 0$ such that, for every $x\in E^+$, there is  $y\in F^+$ so that $T([-x,x])\subset[-y,y]$ and $\|y\|\leq C\|x\|$. Fix $x\in B_E$. Since $x\in[-|x|,|x|]$ and $\|x\|=\||x|\|$, we have $|x|\in B_E$. Then, there exists $y\in F^+$ so that $T(x)\in T([-|x|,|x|])\subset[-y,y]$ and $\|y\|\leq C\||x|\|= C\|x\|$. As $-y\leq T(x)\leq y$, $|T(x)|\leq y$, hence $\|T(x)\|\leq \|y\|$ because $F$ is a normed Riesz space. Therefore, $\|T(x)\|\leq C\|x\|$.
\end{proof}

\begin{lemma} \label{lemaobv-sob} Let
$A \colon E_1 \times \cdots \times E_m \to F$ be an  $m$-linear operator of order bounded variation between Riesz spaces. Then $A$ is separately order bounded, that is, for all $a_1\in E_1,\dots,a_m\in E_m$ and every $1 \leq i \leq m$, the linear operator
$$x_i \in E_i \mapsto A(a_1,\dots,a_{i-1},x_i,a_{i+1},\dots,a_m) \in F$$ is order bounded. 
\end{lemma}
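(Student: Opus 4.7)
By the symmetry of the definition of order bounded variation in the $m$ variables, it suffices to treat the case $i=1$; the remaining coordinates are handled identically. Fix $a_2 \in E_2,\dots,a_m \in E_m$; the plan is to show that the linear map $T\colon x_1 \mapsto A(x_1,a_2,\dots,a_m)$ sends each order interval $[-b,b] \subset E_1$ (with $b \in E_1^+$) into an order interval of $F$.

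First I would reduce to the case where $a_2,\dots,a_m$ are positive. Writing each $a_j = a_j^+ - a_j^-$ and expanding by multilinearity gives
$$T(x_1) = \sum_{\varepsilon \in \{+,-\}^{m-1}} (-1)^{|\varepsilon|} A\bigl(x_1,a_2^{\varepsilon_2},\dots,a_m^{\varepsilon_m}\bigr),$$
a finite sum of linear maps of the same form with positive coordinates inserted. Since a finite sum of order bounded operators is order bounded, it is enough to prove the lemma under the assumption $a_2 \in E_2^+,\dots,a_m \in E_m^+$.

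Assuming this, let $b \in E_1^+$ and $x_1 \in E_1$ with $|x_1| \le b$. The key step is to invoke the definition of order bounded variation with the positive vectors $(b,a_2,\dots,a_m)$: there exists $u \in F^+$ dominating every element of the set $D_{b,a_2,\dots,a_m}$ from \eqref{lm2x}. Now I would take the specific partition $N_1=3$, $x^1_1=x_1^+$, $x^1_2=x_1^-$, $x^1_3 = b-|x_1| \ge 0$ (note $x^1_1+x^1_2+x^1_3 = b$), together with $N_j=1$ and $x^j_1 = a_j$ for $j=2,\dots,m$. The resulting sum belongs to $D_{b,a_2,\dots,a_m}$, so
$$|A(x_1^+,a_2,\dots,a_m)| + |A(x_1^-,a_2,\dots,a_m)| + |A(b-|x_1|,a_2,\dots,a_m)| \le u.$$
In particular both $|A(x_1^+,a_2,\dots,a_m)|$ and $|A(x_1^-,a_2,\dots,a_m)|$ are bounded by $u$, hence $|T(x_1)| \le 2u$. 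Thus $T([-b,b]) \subset [-2u,2u]$, proving order boundedness of $T$ in the first variable.

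There is no real obstacle: the content of the argument is recognizing that the defining condition of order bounded variation, specialized to the trivial partitions $N_j=1$ in all but one variable and to the three-term positive partition $b = x_1^+ + x_1^- + (b-|x_1|)$ in the remaining variable, is exactly the statement that $T$ maps order intervals into order intervals. The only small care needed is the preliminary reduction to positive $a_2,\dots,a_m$, since the hypothesis of order bounded variation is formulated only for positive inputs.
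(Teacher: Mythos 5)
Your proof is correct and follows essentially the same route as the paper's: both specialize the order-bounded-variation condition to trivial partitions in every slot except the active one, and both dispose of the signs of the fixed coordinates by splitting them into positive and negative parts and invoking multilinearity together with the fact that sums of order bounded operators are order bounded. The only cosmetic differences are that you perform the sign reduction first rather than last, and that you use the three-term partition $b=x_1^{+}+x_1^{-}+(b-|x_1|)$ to bound $T$ on $[-b,b]$ where the paper uses the two-term partition $x_i=y_i+(x_i-y_i)$ to bound the operator on $[0,x_i]$.
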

\begin{proof} Fix $1\leq i\leq m$ and $a_1\in {E_1}^+,\dots,a_m\in {E_m}^+$. As $A$ is of order bounded variation, for every $0 \leq x_i \in E_i$ the set 
\begin{equation} \label{yt4z}\left\{\sum_{j = 1}^{N_i} |A(a_1,\dots,a_{i-1},x^i_j,a_{i+1},\dots,a_m)|:\right. N_i \in \mathbb{N}, 0\leq x^i_j \in E_i, \left.\sum_{j= 1}^{N_i}x^i_j = x_i\right\}
 \end{equation} is order bounded in $F$. Hence, the set
$$\left\{|A(a_1,\dots,a_{i-1},y_{i},a_{i+1},\dots,a_m)|:0\leq y_i\leq x_i\right\}$$ is order bounded in $F$, for every $0 \leq x_i \in E_i$, because every element in this set is smaller than or equal to an element of the set (\ref{yt4z}). Calling $A_i$ the linear operator $$x_i \in E_i\mapsto A(a_1,\dots,a_{i-1},x_i,a_{i+1},\dots,a_m) \in F, $$
it follows that $A_i([0,x_i])$ is order bounded for every $0 \leq x_i \in E_i$, which proves that $A_i$ is order bounded.

Now, fix $a_2\in {E_2},a_3\in {E_3}^+,\dots,a_m\in {E_m}^+$. Using what we have just proved, the linear operators
\begin{align*}
x_1 \in E_1\mapsto A(x_1,{a_2}^+,a_3,\dots,\dots,a_m) \in F\text{ ~and}\\x_1 \in E_1 \mapsto A(x_1,{a_2}^-,a_3,\dots,\dots,a_m)\in F~~~~~~\,
\end{align*} are order bounded. As $A$ is $m$-linear and the sum of order bounded linear operators is order bounded, the operator
\begin{equation}\label{8jxq} x_1 \in E_1\mapsto A(x_1,{a_2},a_3,\dots,\dots,a_m) \in F \end{equation}
 is order bounded as well. Repeating the argument, the operator (\ref{8jxq}) is order bounded for fixed $a_2\in {E_2},a_3\in {E_3}, a_4 \in E_4^+,\dots,a_m\in {E_m}^+$. After finitely many repetitions, we get that the operator (\ref{8jxq}) is order bounded for fixed $a_2\in {E_2},a_3\in {E_3}, \ldots, a_m\in E_m$. The same argument gives the order boundedness  in the other variables.
\end{proof}

\begin{proposition} \label{obv-cont}
 Let $E_1, \ldots, E_m$ be Banach lattices and let $F$ be a normed Riesz space. Then every $m$-linear operator $A \colon E_1 \times \cdots \times E_m \to F$ of order bounded variation is continuous.
\end{proposition}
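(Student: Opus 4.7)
The plan is to reduce joint continuity to separate continuity, and then derive separate continuity from the order boundedness of the partial maps via Lemma \ref{lemaob-cont}.

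First, I would apply Lemma \ref{lemaobv-sob} to conclude that $A$ is separately order bounded: for each $1 \leq i \leq m$ and each fixed $a_1, \ldots, a_{i-1}, a_{i+1}, \ldots, a_m$, the partial linear map
$$A_i \colon E_i \to F~,~ x_i \mapsto A(a_1, \ldots, a_{i-1}, x_i, a_{i+1}, \ldots, a_m)$$
is order bounded. Since $E_i$ is a Banach lattice and $F$ is a normed Riesz space, Lemma \ref{lemaob-cont} applies and $A_i$ is continuous. Thus $A$ is separately continuous.

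Next I would promote separate continuity to joint continuity via the usual Banach-Steinhaus argument, by induction on $m$. For $m = 2$: for each fixed $x_1 \in E_1$, the map $x_2 \mapsto A(x_1,x_2)$ is continuous, so the family $\{A(x_1, \cdot) : x_1 \in B_{E_1}\}$ of bounded linear operators from $E_2$ to $F$ is \emph{pointwise bounded} (because for each fixed $x_2$, the map $x_1 \mapsto A(x_1,x_2)$ is also continuous, hence bounded on $B_{E_1}$). By the uniform boundedness principle applied in the Banach space $E_2$,
$$C := \sup_{x_1 \in B_{E_1}} \|A(x_1, \cdot)\| < \infty,$$
which yields $\|A(x_1,x_2)\| \leq C\|x_1\| \|x_2\|$ for all $x_1, x_2$, i.e.\ $A$ is continuous. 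For general $m$, the same argument works: fix $(x_2, \ldots, x_m)$ varying in $B_{E_2} \times \cdots \times B_{E_m}$, observe that the family of linear maps $x_1 \mapsto A(x_1, x_2, \ldots, x_m)$ is pointwise bounded (by separate continuity in the first variable together with the inductive application to the $(m-1)$-linear map obtained by fixing $x_1$), and invoke uniform boundedness once more.

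I do not expect any serious obstacle: the key non-routine input is Lemma \ref{lemaobv-sob}, which has already been established, and the rest is the standard passage from separate to joint continuity on products of Banach spaces, which only requires the domains (not the codomain) to be complete. Care should be taken to spell out the pointwise boundedness needed for the inductive step, but this is obtained immediately from separate continuity combined with the induction hypothesis.
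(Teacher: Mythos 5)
Your proposal is correct and follows essentially the same route as the paper: Lemma \ref{lemaobv-sob} gives separate order boundedness, Lemma \ref{lemaob-cont} upgrades this to separate continuity, and then separate continuity is promoted to joint continuity. The only difference is that the paper cites the literature for this last step, whereas you carry out the standard uniform boundedness induction yourself, and your sketch of it is sound.
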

\begin{proof} By Lemma \ref{lemaobv-sob}, $A$ is separately order bounded, so $A$ is separately continuous by Lemma \ref{lemaob-cont}. The result follows because separately continuous multilinear operators defined on the cartesian product of Banach spaces are continuous (see \cite[Corollary 2.3.5]{Botelho} for the bilinear case or \cite[Theorem 7, p.\,4]{smooth} for the general case). 
\end{proof}

\section{$L$-weakly compact multilinear operators}

Recall that a bounded linear operator $T$ from a Banach lattice $E$ to a Banach space $X$ is $L$-weakly compact if $\|y_n\| \longrightarrow 0$ for every disjoint sequence $(y_n)_n$ in the solid hull of $T(B_E)$ \cite[Definition 5.59]{Alip}.

There are two obvious transpositions of this notion to the multilinear setting. In this section, $X_1,\dots,X_m$ are Banach spaces and $E$ is a Banach lattice. In the cartesian product of normed spaces we shall work with the maximum norm.

\begin{definition}\rm A continuous $m$-linear operator $A\colon X_1\times\cdots\times X_m\to E$ is said to be:\\
(i) {\it $L$-weakly compact} if $\|y_n\|\longrightarrow 0$ whenever $(y_n)_n$ is a disjoint sequence  in the solid hull of $A(B_{X_1 \times \cdots \times X_m})$.\\
(ii) {\it Separately $L$-weakly compact} if $A$ is $L$-weakly compact in each variable, that is, for all $a_1\in X_1,\dots,a_m\in X_m$ and every $i \in \{1, \ldots,m\}$, the bounded linear operator $$x_i \in X_i \mapsto A(a_1,\dots,a_{i-1},x_i,a_{i+1},\dots,a_m)\in X$$
is $L$-weakly compact.
\end{definition}

By ${\cal L}(X_1, \ldots, X_m;E)$ we denote the linear subspace of $L(X_1, \ldots, X_m;E)$ consisting of all continuous operators, which is a Banach space with the usual supremum norm. It is easy to see that the sets of $L$-weakly compact and of separately $M$-weakly compact operators are linear subspaces of ${\cal L}(X_1, \ldots, X_m;E)$. A direct transposition of the proof of \cite[Theorem 5.61]{Alip} shows that $L$-weakly compact multilinear operators are weakly compact (recall that a multilinear operator is weakly compact if bounded sequences are sent to sequences admitting a weakly convergent subsequence). This is no longer true for separately $L$-weakly compact operators:

\begin{example}\label{ol4h}\rm As $\ell_2$ is reflexive, the continuous bilinear operator
$$A\colon \ell_2\times\ell_2\to\ell_1~,~A\left((a_n)_n,(b_n)_n\right)=\left(a_nb_n\right)_n,$$
is separately weakly compact. But $\ell_1$-valued weakly compact linear operators are $L$-weakly compact \cite[Theorem 5.62]{Alip}, therefore $A$ is separately $L$-weakly compact. Since the sequence $(e_n)_n$ of canonical unit vectors has no weakly convergent (or, equivalently, norm convergent) subsequence in $\ell_1$ and $A(e_n,e_n) = e_n$ for every $n$, $A$ fails to be weakly compact.
\end{example}


\begin{proposition}\label{pl4x} $L$-weakly compact multilinear operators are  separately $L$-weakly compact.
\end{proposition}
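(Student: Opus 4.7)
The plan is a direct scaling argument: reduce a slice of $A$ to a rescaling of $A$ itself on the unit ball, and then transfer the disjoint-sequence condition through the solid hull.

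Fix an index $i\in\{1,\dots,m\}$ and vectors $a_1\in X_1,\dots,a_{i-1}\in X_{i-1},a_{i+1}\in X_{i+1},\dots,a_m\in X_m$. Let
$$A_i\colon X_i\to E\,,\quad A_i(x_i)=A(a_1,\dots,a_{i-1},x_i,a_{i+1},\dots,a_m),$$
which is a bounded linear operator. Set $c:=\max\{\|a_j\|:j\neq i\}$. If $c=0$, then by multilinearity $A_i\equiv 0$ is trivially $L$-weakly compact, so assume $c>0$. For any $x_i\in B_{X_i}$, the tuple $(a_1/c,\dots,a_{i-1}/c,x_i,a_{i+1}/c,\dots,a_m/c)$ lies in $B_{X_1\times\cdots\times X_m}$ (with respect to the maximum norm), and by multilinearity
$$A_i(x_i)=c^{m-1}A(a_1/c,\dots,a_{i-1}/c,x_i,a_{i+1}/c,\dots,a_m/c)\in c^{m-1}A(B_{X_1\times\cdots\times X_m}).$$
Hence $A_i(B_{X_i})\subseteq c^{m-1}A(B_{X_1\times\cdots\times X_m})$.

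Next I would transfer this inclusion to the solid hulls. Since the solid hull is monotone with respect to set inclusion and commutes with positive scalar multiplication, we get
$$\mathrm{sol}(A_i(B_{X_i}))\subseteq c^{m-1}\,\mathrm{sol}(A(B_{X_1\times\cdots\times X_m})).$$
Now given any disjoint sequence $(y_n)_n$ in $\mathrm{sol}(A_i(B_{X_i}))$, the sequence $(c^{-(m-1)}y_n)_n$ is also disjoint (scaling preserves disjointness) and lies in $\mathrm{sol}(A(B_{X_1\times\cdots\times X_m}))$. The $L$-weak compactness of $A$ then yields $\|c^{-(m-1)}y_n\|\to 0$, hence $\|y_n\|\to 0$. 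Therefore $A_i$ is $L$-weakly compact, which is precisely what is needed.

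There is no real obstacle in this argument; the only point worth flagging is the use of multilinearity to rescale the frozen variables into the unit ball, which is the standard trick for passing from joint estimates to separate estimates.
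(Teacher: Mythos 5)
Your proof is correct and follows essentially the same route as the paper's: rescale the frozen variables into the unit ball by multilinearity to obtain $A_i(B_{X_i})\subseteq c^{m-1}A(B_{X_1\times\cdots\times X_m})$, then transfer the disjoint-sequence condition (the paper scales by the product $\|a_2\|\cdots\|a_m\|$ rather than by $c^{m-1}$, an immaterial difference). If anything, you are slightly more careful than the paper in explicitly passing the inclusion through the solid hulls and in handling the degenerate case where some frozen vector is zero.
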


\begin{proof} Let $A\colon X_1\times\cdots\times X_m\to E$ be an $L$-weakly compact $m$-linear operator and fix $0 \neq a_2\in X_2,\dots,0 \neq a_m\in X_m$. We have to show that the linear operator
$$A_{a_2,\dots,a_m}\colon X_1\longrightarrow E~,~A_{a_2,\dots,a_m}(x_1)=A(x_1,a_2,\dots,a_m),$$
is $L$-weakly compact. Let $(y_n)_n$ be a disjoint sequence in  $A_{a_2,\dots,a_m}(B_{X_1})$. From
\begin{align*}
A_{a_2,\dots,a_m}(B_{X_1})&=A(B_{X_1}\times\{a_2\}\times\cdots\times\{a_m\})\\&\subseteq \|a_2\|\cdots\|a_m\|\!\cdot\! A(B_{X_1}\times B_{X_2}\times\cdots\times B_{X_m}),
\end{align*}
we get that $(y_n)_n$ is a disjoint sequence in $\|a_2\|\cdots\|a_m\|\!\cdot\! A(B_{X_1}\times B_{X_2}\times\cdots\times B_{X_m})$. As $\|a_2\|\cdots\|a_m\|\!\cdot\! A$ is an $L$-weakly compact $m$-linear opertor, $\|y_n\|\to0$, proving that $A_{a_2,\dots,a_m}$ is an $L$-weakly compact linear operator. The cases of the other variables follow similarly.
\end{proof}

The separately $L$-weakly compact bilinear operator from Example \ref{ol4h} is not $L$-weakly compact because it is not weakly compact. On the one hand, this shows that the converse of the proposition above does not hold; on the other hand, this raises the question whether or not multilinear operators that are simultaneously weakly compact and separately $L$-weakly compact are $L$-weakly compact. By making a slight - but efficient - modification in Example \ref{ol4h}, next we show that this is not case. Moreover, in the next example, which shall be useful later again, we describe an interesting feature of disjoint sequences in sequence spaces.

 Recall that an operator $T$ from a Banach lattice $E$ to a Banach space $X$ is $M$-weakly compact if $T(x_n) \longrightarrow 0$ in $F$ for every norm bounded disjoint sequence $(x_n)_n$ in $E$ \cite[Definition 5.59]{Alip}.

\begin{example}\rm \label{cont.Lfc-ex1} 
Let $E$ be a Riesz space whose elements are real-valued sequences and whose ordering is the coordinatewise ordering. This ordering guarantees the following implication:
\begin{equation} \label{uynh}
\mbox{If } (a_n)_n\perp(b_n)_n \mbox{ in } E, \mbox{then } a_nb_n=0 \mbox{ for every } n\in\N.
\end{equation}Given a disjoint sequence $(\alpha^j)_j=((a^j_n)_n)_j$ in $E$, let us prove that there exists an increasing sequence $(m_k)_k$ in $\N$ so that, for every $k\in\N$, $a^j_i=0$ whenever $i \in \{1,\dots,k\}$  and $j>m_k$. Indeed:

For $k=1$, if $a_1^j=0$ for every $j$ then we put $m_1=1$. Otherwise, let  $m_1$ be any natural number such that $a_1^{m_1}\neq0$. By (\ref{uynh}), $a_1^j = 0$ for every $j \neq m_1$, in particular $a_1^j = 0 $ for every $j \geq m_1$. 

For $k=2$, if $a_2^j=0$ for every $j>m_1$ then we put $m_2=m_1+1$. Otherwise, let $m_2$ be any natural number such that $m_2>m_1$ and $a_2^{m_2}\neq0$. By (\ref{uynh}), $a_2^j =0$ for every $j> m_2$. Since $m_2 > m_1$, we also have $a_1^j = 0 $ for $j > m_2$. 

Inductively, suppose that $m_1<\cdots<m_{k-1}$ have been chosen so that $a_{1}^j = \cdots = a_{k-1}^{j} = 0$ for every $j > m_{k-1}$. If $a_k^j=0$ for every $j>m_{k-1}$ then we put $m_k=m_{k-1}+1$. Otherwise, let $m_k$ be any natural number such that $m_k>m_{k-1}$ and $a_k^{m_k}\neq0$. By (\ref{uynh}), $a_k^j=0$ for every $j> m_k$, hence $a_{1}^j = \cdots = a_{k-1}^{j} = a_k^j= 0$ for $j> m_k$.

It was shown in \cite[Example 3]{Torres} that
    $$A\colon \ell_2\times\ell_2\to\ell_2~,~A\left((a_n)_n,(b_n)_n\right)=\left(a_nb_n\right)_n,$$
is a non-compact separately compact bilinear operator. The reflexivity of  the target space $\ell_2$ gives that $A$ is weakly compact. Let us see that it is separately $L$-weakly compact but not $L$-weakly compact.
Let $(\alpha^j)_j=((a^j_n)_n)_j$ be an arbitrary norm bounded disjoint sequence in $\ell_2$. Fixed $b=(b_n)_n\in\ell_2$, let us check that the linear operator  $A_b\colon\ell_2\to\ell_2$, given by $A_b(a)=A(a,b)$, is $M$-weakly compact. Given a disjoint sequence $(\alpha^j)_j=((a^j_n)_n)_j$ in $B_{\ell_2}$, let $(m_k)_k$ be the increasing sequence of natural numbers constructed in the first part of this example. For each $k\in\N$, the following holds for every $j>m_k$:
\begin{align*}
\|A_b(\alpha^j)\|^2_2&=\|(a_1^j b_1,a_2^j b_2,\dots)\|^2_2=\sum_{n=1}^\infty |a_n^j|^2\cdot |b_n|^2=\sum_{n=k+1}^\infty |a_n^j|^2\cdot |b_n|^2\leq \sum_{n=k+1}^\infty |b_n|^2.
\end{align*}
Since $b\in\ell_2$, given $\varepsilon>0$ there exists $n_0$ such that $\sum\limits_{n=n_0}^\infty |b_n|^2<\varepsilon$. And, as the sequence $(m_k)_k$ is increasing, we can take $k_0$ with $m_{k_0}>n_0$. Putting $j_0 = m_{m_{k_0}} \in \mathbb{N}$,  we get
 $$\|A_b(\alpha^j)\|^2_2 \leq \sum_{n=m_{k_0}+1}^\infty |b_n|^2 \leq \sum_{n=m_{k_0}}^\infty |b_n|^2 \leq  \sum_{n=n_0}^\infty |b_n|^2  < \varepsilon \mbox{ for every } j \geq j_0.$$
 This proves that $A_b(\alpha^j)\to0$, hence $A_b$ is $M$-weakly compact. Since $\ell_2 = \ell_2^*$ has order continuous norm, it follows from \cite[Theorem 5.67]{Alip} that $A_b$  is $L$-weakly compact. The symmetry of $A$ gives that $A$ is separately $L$-weakly compact.

To see that $A$ fails to be $L$-weakly compact, note that the sequence $(e_n)_n$ of canonical unit vectors of $\ell_2$ is a normalized - hence not norm null - disjoint sequence contained in (the solid hull) of $A(B_{\ell_2 \times \ell_2})$ because $A(e_n,e_n) = e_n$ for every $n$. 
\end{example}

One side of the duality between $L$- and $M$-weakly compact linear operators establishes that an operator $T \in {\cal L}(X ; E)$ is $L$-weakly compact if and only if $T^* \in {\cal L}(E^* ; X^*)$ is $M$-weakly compact \cite[Theorem 5.64(2)]{Alip}. Our next aim is to prove the multilinear counterpart of this duality. In order to prove a bit more than that, some terminology and notation are needed.

By $X_1 \widehat{\otimes}_\pi \cdots \widehat{\otimes}_\pi X_m$ we denote the completed projective tensor product of the Banach spaces $X_1, \ldots, X_m$ endowed with the projective norm $\pi$. Denoting by
$$
\sigma_m \colon X_1 \times \cdots \times X_m \to X_1 \widehat{\otimes}_\pi \cdots \widehat{\otimes}_\pi X_m, \quad (x_1, \ldots, x_m) \mapsto x_1 \otimes \cdots \otimes x_m,$$
the canonical norm one $m$-linear operator, for each operator $A \in \mathcal{L}(X_1, \ldots, X_m; Y)$ there exists a unique bounded linear operator $A_L\in \mathcal{L}(X_1 \widehat{\otimes}_\pi \cdots \widehat{\otimes}_\pi X_m; Y)$, called the linearization of $A$, such that $A = A_L \circ \sigma_m$. Moreover, the correspondence \begin{equation} \label{re9k}A \in \mathcal{L}(X_1, \ldots, X_m; Y) \mapsto A_L \in \mathcal{L}(X_1 \widehat{\otimes}_\pi \cdots \widehat{\otimes}_\pi X_m; Y)\end{equation}
is an isometric isomorphism. For details, see \cite{df, Ryan}.

It is known that a multilinear operator $A$ is (weakly) compact if and only if its adjoint $A^*$ is a (weakly) compact linear operator if and only if its linearization $A_L$ is (weakly) compact \cite{Aron, prims, Mujicatrans, Ryanpacific}.

Bearing the two paragraphs above in mind, the next result is the best one can expect for $L$-weakly compact multilinear operators.

\begin{theorem}\label{Lfc} Let $X_1,\dots,X_m$ be Banach spaces and let $E$ be a Banach lattice. The following are equivalent for an operator $A\in\mathcal{L}(X_1,\dots,X_m;E)$.\\
{\rm (i)} $A$ is $L$-weakly compact.\\
{\rm (ii)} $A^*\colon E^*\to \mathcal{L}(X_1,\dots,X_m;\mathbb{R})$ is an $M$-weakly compact linear operator. \\
{\rm (iii)} $A_L\colon X_1 \widehat{\otimes}_\pi \cdots \widehat{\otimes}_\pi X_m\to E$ is an $L$-weakly compact linear operator.
\end{theorem}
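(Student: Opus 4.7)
The plan is to establish the chain $(\mathrm{ii}) \Leftrightarrow (\mathrm{iii}) \Leftrightarrow (\mathrm{i})$, exploiting the factorization $A = A_L \circ \sigma_m$ to reduce everything to well-known linear results. The equivalence $(\mathrm{ii}) \Leftrightarrow (\mathrm{iii})$ will follow from the classical linear $L$--$M$ duality \cite[Theorem 5.64(2)]{Alip} applied to $A_L$, after identifying $A^*$ with $(A_L)^*$ via the universal property of the projective tensor product. The equivalence $(\mathrm{i}) \Leftrightarrow (\mathrm{iii})$ will come from comparing the sets $A(B_{X_1 \times \cdots \times X_m})$ and $A_L(B_{X_1 \widehat{\otimes}_\pi \cdots \widehat{\otimes}_\pi X_m})$, using the description of the projective tensor product's unit ball as the closed absolutely convex hull of $\sigma_m(B_{X_1} \times \cdots \times B_{X_m})$.

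For $(\mathrm{ii}) \Leftrightarrow (\mathrm{iii})$, I would specialize the isometric isomorphism \eqref{re9k} to $Y = \mathbb{R}$, which identifies $\mathcal{L}(X_1, \ldots, X_m; \mathbb{R})$ with the topological dual $(X_1 \widehat{\otimes}_\pi \cdots \widehat{\otimes}_\pi X_m)^*$, and then verify on elementary tensors that $A^*$ corresponds to $(A_L)^*$ under this identification. Since $A_L$ is a bounded linear operator from a Banach space into the Banach lattice $E$, the classical linear duality \cite[Theorem 5.64(2)]{Alip} immediately yields that $A_L$ is $L$-weakly compact if and only if $(A_L)^* = A^*$ is $M$-weakly compact.

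The direction $(\mathrm{iii}) \Rightarrow (\mathrm{i})$ is immediate: since $\sigma_m$ has norm one,
$$
A(B_{X_1 \times \cdots \times X_m}) = A_L\bigl(\sigma_m(B_{X_1 \times \cdots \times X_m})\bigr) \subseteq A_L\bigl(B_{X_1 \widehat{\otimes}_\pi \cdots \widehat{\otimes}_\pi X_m}\bigr),
$$
so the solid hull of the smaller set is contained in the solid hull of the larger, and any disjoint sequence in the former is norm null by (iii). For $(\mathrm{i}) \Rightarrow (\mathrm{iii})$, I would use that $B_{X_1 \widehat{\otimes}_\pi \cdots \widehat{\otimes}_\pi X_m}$ equals the closed absolutely convex hull of $\sigma_m(B_{X_1} \times \cdots \times B_{X_m})$ (the standard description of the projective norm), so by continuity of $A_L$ the set $A_L(B_{X_1 \widehat{\otimes}_\pi \cdots \widehat{\otimes}_\pi X_m})$ is contained in the closed absolutely convex hull of $A(B_{X_1 \times \cdots \times X_m})$. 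Assumption (i) says $A(B_{X_1 \times \cdots \times X_m})$ is an $L$-weakly compact subset of $E$, and invoking the well-known stability of $L$-weak compactness under closed absolutely convex hulls (see, e.g., \cite[Proposition 3.6.5]{MeyerPeter}) together with the trivial monotonicity of $L$-weak compactness under set inclusion yields that $A_L(B_{X_1 \widehat{\otimes}_\pi \cdots \widehat{\otimes}_\pi X_m})$ is $L$-weakly compact, which is (iii).

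The step I expect to be the main obstacle is precisely this last invocation: while the stability of $L$-weak compactness under closed absolutely convex hulls is classical, one must either locate the right reference or reprove it via the equivalent characterization of $L$-weak compactness in terms of approximate domination by order intervals. The remaining steps are essentially bookkeeping around the universal property of the projective tensor product and the linear adjoint duality.
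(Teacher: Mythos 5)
Your proposal is correct, but it closes the cycle of implications along a genuinely different path from the paper for the part involving (i). The equivalence (ii)$\Leftrightarrow$(iii) is handled exactly as in the paper: identify $\mathcal{L}(X_1,\dots,X_m;\mathbb{R})$ with $(X_1 \widehat{\otimes}_\pi \cdots \widehat{\otimes}_\pi X_m)^*$ via \eqref{re9k}, check $(A^*(y^*))_L=(A_L)^*(y^*)$ on elementary tensors, and invoke the linear duality \cite[Theorem 5.64(2)]{Alip}. Where you diverge is that the paper proves (i)$\Leftrightarrow$(ii) directly, by writing $\|y_n\|=\sup_{y^*\in B_{E^*}}|y^*(y_n)|$ and applying the Burkinshaw--Dodds theorem \cite{Burk-Dodds} to interchange the roles of disjoint sequences in ${\rm Sol}(A(B_{X_1\times\cdots\times X_m}))$ and in $B_{E^*}$, which turns the defining condition for $L$-weak compactness of $A$ into the statement $\|A^*(y^*_n)\|\to 0$ for disjoint $(y^*_n)_n\subseteq B_{E^*}$. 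You instead prove (i)$\Leftrightarrow$(iii) geometrically: the inclusion $A(B_{X_1\times\cdots\times X_m})\subseteq A_L(B_{X_1 \widehat{\otimes}_\pi \cdots \widehat{\otimes}_\pi X_m})$ gives (iii)$\Rightarrow$(i) at once, and for the converse you use that the projective unit ball is the closed (absolutely) convex hull of the elementary tensors of unit vectors together with the permanence of $L$-weakly compact sets under closed convex hulls. That last fact is indeed classical and not a real gap: it follows from the characterization of $L$-weakly compact sets by approximate domination by order intervals ($W\subseteq [-u,u]+\varepsilon B_E$), see \cite[Proposition 3.6.2]{MeyerPeter} or \cite[Theorem 5.55]{Alip}, since such a containment passes to the closed convex hull at the cost of doubling $\varepsilon$. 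In exchange for having to quote this permanence property and the description of $B_{X_1 \widehat{\otimes}_\pi \cdots \widehat{\otimes}_\pi X_m}$, your route reduces everything to purely linear statements about $A_L$ and avoids the Burkinshaw--Dodds duality altogether; the paper's route works entirely at the level of $A$ and $A^*$ and only touches the tensor product for (iii). Both arguments are complete and of comparable length.
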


\begin{proof} (i)$\Leftrightarrow$(ii) By Hahn-Banach, $A$ is $L$-weakly compact if and only if
\begin{equation}\label{oim2}\|y_n\|=\sup\limits_{y^*\in B_{E^*}}|y^*(y_n)|\longrightarrow 0 \mbox{ for every disjoint sequence } (y_n)_n \subseteq {\rm Sol}(A(B_{X_1 \times \cdots \times X_m})).\end{equation}
As $A(B_{X_1 \times \cdots \times X_m})\subseteq E$ and $B_{E^*}\subseteq E^*$, by the Burkinshaw–Dodds Theorem \cite{Burk-Dodds} (see also \cite[Theorem 5.63]{Alip}), (\ref{oim2}) is equivalent to
\begin{equation}\label{pm9w}
\sup\limits_{y\in A(B_{X_1 \times \cdots \times X_m})}|y^*_n(y)|\longrightarrow 0 \mbox{ for every disjoint sequence } (y^*_n)_n \subseteq {\rm Sol}(B_{E^*})=B_{E^*}.\end{equation}
Note that, for each $y^*\in E^*$,
\begin{align*}
\sup\limits_{y\in A(B_{X_1 \times \cdots \times X_m})}|y^*(y)|&=\sup\{|y^*(A(x_1,\dots,x_m))|:(x_1,\dots,x_m)\in B_{X_1 \times \cdots \times X_m}\}\\&=\sup\{|[A^*(y^*)](x_1,\dots,x_m)|:(x_1,\dots,x_m)\in B_{X_1 \times \cdots \times X_m}\}=\|A^*(y^*)\|.
\end{align*}
So, (\ref{pm9w}) holds if and only if
  $\|A^*(y^*_n)\|\longrightarrow 0$ for every disjoint sequence $(y^*_n)_n$ in $B_{E^*}$; that is, if and only if $A^*$ is $M$-weakly compact.

  \medskip

\noindent(ii)$\Leftrightarrow$(iii) For all $x_1\in X_1,\dots,x_m\in X_m$ and $y^*\in E^*$,
\begin{align*}
(A^*(y^*))_L(x_1\otimes\cdots\otimes x_m))&=A^*(y^*)(x_1,\dots,x_m)=y^*(A(x_1,\dots,x_m))\\&= y^*(A_L(x_1\otimes\cdots\otimes x_m))=[(A_L)^*(y^*))](x_1\otimes\cdots\otimes x_m).
\end{align*}
The linearity and the uniqueness of $(A^*(y^*))_L$ gives $(A^*(y^*))_L=(A_L)^*(y^*)$. Recalling that $A^*(y^*)\in\mathcal{L}(X_1,\dots,X_m;\R)$, by (\ref{re9k}) we have
$$\|A^*(y^*)\|= \|(A^*(y^*))_L\|=\|(A_L)^*(y^*)\| $$
for every $y^* \in E^*$. Therefore, $A^*$ is $M$-weakly compact if and only if $(A_L)^*$ is $M$-weakly compact if and only if (by \cite[Theorem 5.64(2)]{Alip}) $A_L$ is $L$-weakly compact.
\end{proof}

%

\section{$M$-weakly compact multilinear operators}

Unless explicitly stated otherwise, in this section $E, E_1, \ldots, E_m,F$ are Banach lattices and $X$ is a Banach space. It is well known that a bounded linear operator $T \colon E \to X$ is $M$-weakly compact if and only if its adjoint $T \colon X^* \to E^*$ is $L$-weakly compact \cite[Theorem 5.64(1)]{Alip}. One (obvious) reason that makes this possible is the fact that $E^*$ is a Banach lattice. For a continuous $m$-linear operator $A \colon E_1 \times \cdots \times E_m \to X$, the situation is a bit more involved because the adjoint $A^*$ takes values in ${\cal L}(E_1, \ldots, E_m;\mathbb{R})$, which is not a Banach lattice - not even a Riesz space - in general. To overcome this issue, we work with multilinear operators $A \colon E_1 \times \cdots \times E_m \to F$ between Banach lattices of order bounded variation because, in this case, the adjoint $A^*$ of $A$ takes values in $L_r(E_1, \ldots, E_m;\mathbb{R})$, which is a Banach lattice (cf. (\ref{pl2h})).

Since regular linear/multilinear operators are continuous, we shall use the symbol ${\cal L}_r$ instead of $L_r$ to denote the spaces of such operators.

A more sensitive issue is the identification of the class of multilinear operators whose adjoints are $L$-weakly compact. Of course, the first attempts should be the two natural multilinear  counterparts of the class of $M$-weakly compact linear operators which we define next.

\begin{definition}\rm A continuous $m$-linear operator $A \colon E_1 \times \cdots \times E_m \to X$ is said to be:\\
(i) {\it $M$-weakly compact} if $A(x_n) \longrightarrow 0$ in $X$ whenever $(x_n)_n$ is a bounded disjoint sequence in $E_1 \times \cdots \times E_m$ (with respect to the coordinatewise ordering in the cartesian product). \\
(ii) {\it Separately $M$-weakly compact} if $A$ is $M$-weakly compact in each variable, that is, for all $a_1 \in E_1, \ldots, a_m \in E_m$ and every $i \in \{1, \ldots, m\}$, the bounded linear operator $$x_i \in E_i \mapsto A(a_1, \ldots, a_{i-1}, x_i, a_{i+i}, \ldots, a_m) \in X$$ is $M$-weakly compact.
\end{definition}


It is clear that the sets of such operators are linear subspaces of ${\cal L}(E_1, \ldots, E_m;X)$. Let us see that these two classes are not related.

\begin{example} \label{o2ny}\rm Let $A \colon \ell_2 \times \ell_2 \to \ell_2$ be the bilinear operator from Example \ref{cont.Lfc-ex1}, which we proved to be separately $M$-weakly compact. Since the sequence $(e_n)_n$ of canonical unit vectors is bounded and disjoint in $\ell_2$ and $\|A(e_n,e_n)\| = \|e_n\| = 1$ for every $n$, we conclude that $A$ fails to be $M$-weakly compact.
\end{example}

Proposition \ref{pl4x} suggests that $M$-weakly compact multilinear operators are separately $M$-weakly compact. The fact that this is not true (in \cite[Example 3.2]{vinger} one can find $M$-weakly compact bilinear forms that are not separately $M$-weakly compact), is the first clue that this section does not go hand-in-hand with the previous section.


In Theorem \ref{Teo.M-frac.compBV} we shall prove, among many other things, that if the adjoint $A^*$ of a multilinear operator $A$ is $L$-weakly compact, then $A$ must be $M$-weakly compact and separately $M$-weakly compact. Thus, the two examples above show that these two properties are not enough for $A^*$ to be $L$-weakly compact. In summary, sufficient conditions on $A$ for $A^*$ to be $L$-weakly compact are yet to be found. In this search, it is a natural step to explore a few other multilinear transpositions of the class of $M$-weakly compact operators.

We start by considering an already studied different notion of disjointness in the cartesian product. For $x$ and $y$ in a Riesz space, the following is well known:\\
$\bullet$ $x \perp y$ implies $ax \perp b y$ for all $a,b \in \mathbb{R}$.\\
$\bullet$ $|y| \leq |x|$ implies $y \perp x$. \\
$\bullet$ If  $T \colon E \to F$ is a Riesz homomorphism and $x \perp y$, then  $T(x)\perp T(y)$.

The following concept, which is a particular case of a more general notion introduced by Kusraeva \cite{Kusraeva}, clearly fulfills the conditions of the first  two bullets above.

\begin{definition}\rm The vectors $(x_1,\dots,x_m)$, $(y_1,\dots,y_m)$ in $E_1\times\cdots\times E_m$ are said to be {\it sporadically disjoint} if $x_i \perp y_i$ in $E_i$ for some $i\in\{1,\dots,m\}$.
\end{definition}

Recall that an $m$-linear operator $A \colon E_1 \times \cdots \times E_m \to F$ is a {\it Riesz multimorphism}, or a {\it Riesz $m$-morphism}, if $|A(x_1, \ldots, x_m)| = A(|x_1|, \ldots, |x_m|)$ for every $(x_1, \ldots, x_m) \in E_1 \times \cdots \times E_m$. The next result, due to Boulabiar, proves that sporadic disjointness also fulfills the condition of the third bullet above.

\begin{proposition} {\rm \cite[Lemma 1.2]{Boulabiar}} \label{Bol.lemma.1.2}
 If $A\colon E_1\times\cdots\times E_m\to F$ is a Riesz $m$-morphism and $(x_1,\dots,x_m),(y_1,\dots,y_m)$ are sporadically disjoint in $E_1\times\cdots\times E_m$, then $A(x_1,\dots,x_m)\perp A(y_1,\dots,y_m)$ in $F$.
\end{proposition}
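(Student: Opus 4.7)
The plan is to reduce the problem to an instance of the well-known fact that Riesz homomorphisms between Riesz spaces preserve the lattice operations, after rewriting both sides via the defining identity of Riesz multimorphisms. We want to show $|A(x_1,\dots,x_m)|\wedge|A(y_1,\dots,y_m)|=0$, and by the very definition of a Riesz $m$-morphism this is the same as showing
\[
A(|x_1|,\dots,|x_m|)\wedge A(|y_1|,\dots,|y_m|)=0.
\]

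Without loss of generality, assume the sporadic disjointness is witnessed by the first coordinate, i.e., $x_1\perp y_1$, so that $|x_1|\wedge|y_1|=0$. Set $b_j:=|x_j|\vee|y_j|\ge 0$ for $j=2,\dots,m$. Since $A$ is a Riesz $m$-morphism, $A(|x_1|,\dots,|x_m|)=|A(x_1,\dots,x_m)|\ge 0$ and similarly for the $y$'s, so $A$ is positive. Monotonicity of $A$ in each positive entry then gives the upper bounds
\begin{align*}
A(|x_1|,|x_2|,\dots,|x_m|)&\le A(|x_1|,b_2,\dots,b_m),\\
A(|y_1|,|y_2|,\dots,|y_m|)&\le A(|y_1|,b_2,\dots,b_m).
\end{align*}
The problem is therefore reduced to proving
\[
A(|x_1|,b_2,\dots,b_m)\wedge A(|y_1|,b_2,\dots,b_m)=0.
\]

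To finish, I would check that, for fixed $b_2,\dots,b_m\ge 0$, the linear operator $T\colon E_1\to F$ defined by $T(z):=A(z,b_2,\dots,b_m)$ is a Riesz homomorphism. Indeed, because $b_j\ge 0$ implies $|b_j|=b_j$, the Riesz $m$-morphism identity yields
\[
|T(z)|=|A(z,b_2,\dots,b_m)|=A(|z|,b_2,\dots,b_m)=T(|z|),
\]
and an operator satisfying $|T(z)|=T(|z|)$ is a Riesz homomorphism (via the standard identity $a\vee b=\tfrac{1}{2}(a+b+|a-b|)$). Consequently $T$ preserves meets, so
\[
T(|x_1|)\wedge T(|y_1|)=T(|x_1|\wedge|y_1|)=T(0)=0,
\]
which completes the chain of inequalities and proves the claim.

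The only conceptual step that requires care is verifying that the partial operator in a single variable (with the other positive entries frozen) is a genuine Riesz homomorphism; once this is isolated, the rest is dominance by positive monotonicity together with the classical lattice identity. No additional hypothesis beyond those already available (Archimedean Riesz spaces and $A$ being a Riesz $m$-morphism) is needed, and the argument is symmetric in the index $i$, so the reduction ``WLOG $i=1$'' is harmless.
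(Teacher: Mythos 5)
Your argument is correct. Note, however, that the paper does not prove this statement at all: it is imported verbatim as \cite[Lemma 1.2]{Boulabiar}, so there is no in-paper proof to compare against. What you have written is a sound, self-contained derivation: the reduction $|A(x_1,\dots,x_m)|\wedge|A(y_1,\dots,y_m)|=A(|x_1|,\dots,|x_m|)\wedge A(|y_1|,\dots,|y_m|)$ via the multimorphism identity is legitimate; positivity of $A$ on the positive cone (hence coordinatewise monotonicity) follows from $A(|x_1|,\dots,|x_m|)=|A(x_1,\dots,x_m)|\ge 0$; the domination by $A(|x_1|,b_2,\dots,b_m)$ and $A(|y_1|,b_2,\dots,b_m)$ with $b_j=|x_j|\vee|y_j|$ is valid because both dominated terms are positive; and the key step --- that freezing the positive entries $b_2,\dots,b_m$ yields a linear map $T$ with $|T(z)|=T(|z|)$, hence a Riesz homomorphism preserving meets --- is the standard characterization (see, e.g., \cite[Theorem 2.14]{Alip}), so $T(|x_1|)\wedge T(|y_1|)=T(|x_1|\wedge|y_1|)=0$. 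The only cosmetic caveat is that the claim ``$A$ is positive'' deserves the half-sentence justification that every positive tuple is of the form $(|z_1|,\dots,|z_m|)$; otherwise the proof is complete and matches the spirit of Boulabiar's original argument.
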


We shall need one more close connection of sporadic disjointness with the theory of multilinear operators on Banach lattices, namely, the fact that sporadic disjoint elements of the cartesian product correspond exactly to disjoint elementary tensors in the positive projective tensor product (cf. Lemma \ref{TensosseEspor}). Let $E_1\overline{\otimes}\cdots\overline{\otimes}E_m$ denote the Fremlin tensor product of the Archimedean Riesz spaces $E_1, \ldots, E_m$ (see \cite{FremlinTPAVL,FremlinTPBL,Buskes3,Buskes2}).  If $E_1, \ldots, E_m$ are Banach lattices, then the functional
$$x \in E_1\overline{\otimes}\cdots\overline{\otimes}E_m \mapsto \|x\|_{|\pi|}=\inf \left\{ \sum_{j=1}^k \|x_j^1\| \cdots \|x_j^m\| : x_j^i\in {E_i}^+,\, |x| \leq \sum_{j=1}^k x_j^1 \otimes \cdots \otimes x_j^m \right\}
$$
defines a lattice norm on $E_1 \overline{\otimes} \cdots \overline{\otimes} E_m$ (see \cite{FremlinTPBL}).  The completion of $E_1 \overline{\otimes} \cdots \overline{\otimes} E_m$ with respect to the norm $\|\cdot\|_{|\pi|}$, which is a Banach lattice \cite[Theorem 4.2]{Alip} containing $E_1 \otimes \cdots \otimes E_m$ as a dense subspace (see \cite[p.\,850, (g)]{Buskes3} or \cite[p.\,118]{Buskes}), is denoted by  $E_1{\hat{\otimes}}_{|\pi|}\cdots{\hat{\otimes}}_{|\pi|} E_m$ and called the {\it positive projective tensor product of} $E_1, \ldots, E_m$. Keeping the notation of the literature specialized in Banach lattices/Riesz spaces, by
$$\otimes\colon E_1\times\cdots\times E_m\to E_1{\hat{\otimes}}_{|\pi|}\cdots{\hat{\otimes}}_{|\pi|} E_m~,~\otimes(x_1,\dots,x_m)=x_1\otimes\cdots\otimes x_m,$$
we denote the canonical norm one Riesz $m$-morphism.

Let $E_1, \ldots, E_m,F$ be Banach lattices with $F$ Dedekind complete. For every operator $A \in \mathcal{L}_r(E_1,\dots,E_m;F)$ there exists a unique $A^\otimes \in \mathcal{L}_{r}(E_1{\hat{\otimes}}_{|\pi|}\cdots{\hat{\otimes}}_{|\pi|} E_m;F)$ such that $A = A^\otimes \circ \otimes$. Moreover, the correspondence
\begin{equation}A \in \mathcal{L}_r(E_1,\dots,E_m;F) \mapsto A^\otimes \in \mathcal{L}_{r}(E_1{\hat{\otimes}}_{|\pi|}\cdots{\hat{\otimes}}_{|\pi|} E_m;F) \label{km22}
\end{equation}
is an isometric isomorphism, where both of the spaces are endowed with their respective regular norms, and a Riesz isomorphism \cite[Proposition 3.3]{Buskes3}.

\begin{lemma} \label{TensosseEspor}
       Two vectors $(x_1,\dots,x_m)$, $(y_1,\dots,y_m)$ in $E_1\times\cdots\times E_m$ are sporadically disjoint if and only if $x_1\otimes\cdots\otimes x_m\perp y_1\otimes\cdots\otimes y_m$ in $E_1{\hat{\otimes}}_{|\pi|}\cdots{\hat{\otimes}}_{|\pi|} E_m$.
\end{lemma}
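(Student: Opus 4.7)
The plan is to handle the two implications separately, using Boulabiar's lemma for the easy direction and a contrapositive argument together with the universal property of the positive projective tensor product for the harder one.

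For the forward direction, I would apply Proposition \ref{Bol.lemma.1.2} directly: the canonical map $\otimes\colon E_1\times\cdots\times E_m\to E_1{\hat{\otimes}}_{|\pi|}\cdots{\hat{\otimes}}_{|\pi|} E_m$ is a Riesz $m$-morphism, as stated in the paragraph preceding the lemma. Therefore sporadic disjointness of $(x_1,\ldots,x_m)$ and $(y_1,\ldots,y_m)$ immediately yields $x_1\otimes\cdots\otimes x_m\perp y_1\otimes\cdots\otimes y_m$, with no further work required.

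For the converse, I would argue by contrapositive. Assume that $(x_1,\ldots,x_m)$ and $(y_1,\ldots,y_m)$ are not sporadically disjoint, so that $a_i:=|x_i|\wedge|y_i|>0$ in $E_i$ for every $i\in\{1,\ldots,m\}$. Since $\otimes$ is a Riesz $m$-morphism, $|x_1\otimes\cdots\otimes x_m|=|x_1|\otimes\cdots\otimes|x_m|$ and analogously for the $y$'s. Using multilinearity and positivity of $\otimes$ (via the telescoping identity $v_1\otimes\cdots\otimes v_m - u_1\otimes\cdots\otimes u_m = \sum_{k=1}^m u_1\otimes\cdots\otimes u_{k-1}\otimes(v_k-u_k)\otimes v_{k+1}\otimes\cdots\otimes v_m$), tensor products of positive vectors are monotone in each coordinate. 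As $0\leq a_i\leq |x_i|$ and $0\leq a_i\leq|y_i|$, this gives $0\leq a_1\otimes\cdots\otimes a_m\leq|x_1|\otimes\cdots\otimes|x_m|$ and likewise $\leq|y_1|\otimes\cdots\otimes|y_m|$. Hence $a_1\otimes\cdots\otimes a_m$ is a positive common lower bound of $|x_1\otimes\cdots\otimes x_m|$ and $|y_1\otimes\cdots\otimes y_m|$.

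The main obstacle, as I expect, is to verify that $a_1\otimes\cdots\otimes a_m$ is genuinely nonzero; this is what forces the two elementary tensors to fail to be disjoint. To achieve this, pick (by a standard Hahn--Banach argument: take $f\in E_i^*$ with $f(a_i)=\|a_i\|>0$ and replace $f$ by $|f|$) positive functionals $f_i\in(E_i^*)^+$ with $f_i(a_i)>0$ for every $i$. The $m$-linear form $\varphi(z_1,\ldots,z_m)=f_1(z_1)\cdots f_m(z_m)$ is positive, hence regular, on $E_1\times\cdots\times E_m$. Since $\mathbb{R}$ is Dedekind complete, the universal property (\ref{km22}) linearizes $\varphi$ to a regular linear functional $\varphi^{\otimes}$ on $E_1{\hat{\otimes}}_{|\pi|}\cdots{\hat{\otimes}}_{|\pi|} E_m$ satisfying $\varphi^{\otimes}(a_1\otimes\cdots\otimes a_m)=f_1(a_1)\cdots f_m(a_m)>0$. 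Consequently $a_1\otimes\cdots\otimes a_m>0$, so $|x_1\otimes\cdots\otimes x_m|\wedge|y_1\otimes\cdots\otimes y_m|>0$, contradicting the assumed disjointness. This completes the contrapositive and thus the proof.
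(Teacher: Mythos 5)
Your proposal is correct and follows essentially the same route as the paper: the forward implication is Proposition \ref{Bol.lemma.1.2} applied to the Riesz $m$-morphism $\otimes$, and the converse rests on the same key observation that common positive lower bounds $z_i$ of $|x_i|$ and $|y_i|$ produce the common lower bound $z_1\otimes\cdots\otimes z_m$ of the two elementary tensors (you phrase it contrapositively with $z_i=|x_i|\wedge|y_i|$, the paper argues directly). The only local difference is how nonvanishing of a positive elementary tensor with nonzero factors is certified: the paper uses the cross-norm identity $\|z_1\otimes\cdots\otimes z_m\|_{|\pi|}=\|z_1\|\cdots\|z_m\|$, while you separate it with positive functionals via the universal property (\ref{km22}); both are valid.
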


\begin{proof} Since the canonical $m$-linear operator $\otimes$ is a Riesz multimorphism, one direction follows from Proposition \ref{Bol.lemma.1.2}. Conversely, suppose that $x_1\otimes\cdots\otimes x_m\perp y_1\otimes\cdots\otimes y_m$ in $E_1{\hat{\otimes}}_{|\pi|}\cdots{\hat{\otimes}}_{|\pi|} E_m$. Using again that $\otimes$ is a Riesz $m$-morphism, $|x_1\otimes\cdots\otimes x_m|=|x_1|\otimes\cdots\otimes |x_m|$ holds true, so we can assume, wlog, that the vectors $x_1, y_1, \ldots, x_m,y_m $ are all positive.  For each $1\leq i\leq m$, let $0\leq z_i\in E_i$ be such that $0\leq z_i\leq x_i$ e $0\leq z_i\leq y_i$. Riesz multimorphism are Riesz homomorphisms separately, so $\otimes$ preserves disjointness fixing $(m-1)$ positive coordinates. It follows that
\begin{align*}
z_1\otimes x_2\cdots\otimes x_m&=(z_1\wedge x_1)\otimes x_2\cdots\otimes x_m=(z_1\otimes x_2\cdots\otimes x_m)\wedge (x_1\otimes x_2\cdots\otimes x_m).
\end{align*}
Hence, $z_1\otimes x_2\cdots\otimes x_m\leq x_1\otimes x_2\cdots\otimes x_m,$ and, analogously, $z_1\otimes y_2\cdots\otimes y_m\leq y_1\otimes y_2\cdots\otimes y_m.$
Repeating the same reasoning for the other variables, we get 
\begin{align*}
0\leq z_1\otimes\cdots\otimes z_m\leq x_1\otimes\cdots\otimes x_m\quad\text{and}\quad 0\leq z_1\otimes\cdots\otimes z_m\leq y_1\otimes\cdots\otimes y_m.
\end{align*}
As $x_1\otimes\cdots\otimes x_m\perp y_1\otimes\cdots\otimes y_m$,  $z_1\otimes\cdots\otimes z_m= 0$. Therefore, $\|z_1\| \cdots \|z_m\| = \| z_1\otimes\cdots\otimes z_m\|_{|\pi|} = 0$, from which we conclude that $z_i=0$ for some $1\leq i\leq m$, that is, $x_i\perp y_i$.
\end{proof}

At this point, the next definition is quite natural. Obviously, a sequence $(a_n)_n$ in  $E_1\times\cdots\times E_m$ is said to be {\it sporadically disjoint} if $a_n$ and $a_k$ are sporadically disjoint whenever $n \neq k$.

\begin{definition}\rm
An $m$-linear operator $A\in\mathcal{L}(E_1,\dots,E_m;X)$ is {\it sporadically $M$-weakly compact} if $A(a_k)\longrightarrow 0$ in $X$ for every sporadically disjoint sequence $(a_n)_n$ in $B_{E_1\times\cdots\times E_m}$.
\end{definition}

In order to define one more class of $M$-weakly compact-type multilinear operators, let us fix the following notation: Let $A\colon V_1\times\cdots\times V_m\longrightarrow W$ be an $m$-linear operator between linear spaces. Fixed $p_1<\cdots<p_k\in\{1,\dots,m\}$ and  $a_{p_1}\in V_{p_1},\dots,a_{p_k}\in V_{p_k}$, it is clear that the map  $A_{a_{p_1},\dots,a_{p_k}}\colon V_1\times\stackrel{[p_1,\dots,p_k]}{\dots}\times V_m\longrightarrow Y$ given by \[A_{a_{p_1},\dots,a_{p_k}}(x_1,\stackrel{[p_1,\dots,p_k]}{\dots},x_m)=A(x_1,\dots,x_{p_1-1},a_{p_1},x_{p_1+1},\dots,x_{p_k-1},a_{p_k},x_{p_k+1},\dots,x_m),\]
  where $\stackrel{[p_1,\dots,p_k]}{\dots}$ means that the coordinates  $p_1,\dots,p_k$ have been omitted, is an $(m-k)$-linear operator. 
We say that  $A_{a_{p_1},\dots,a_{p_k}}$ is the {\it resulting operator letting} $a_{p_1}\in X_{p_1},\dots,a_{p_k}\in X_{p_k}$ {\it fixed}. The next concept was introduced and proved to be useful in \cite{vinger}.

\begin{definition}\rm \cite[Definition 3.3]{vinger}\label{defseparM}
An $m$-linear operator $A\in\mathcal{L}(E_1,\dots,E_m;X)$ is said to be {\it strongly $M$-weakly compact} if, regardless of the numbers $0\leq k\leq m-1, p_1<\cdots<p_k\in\{1,\dots,m\}$ and the vectors $a_{p_1}\in E_{p_1},\dots,a_{p_k}\in E_{p_k}$, the resulting $(m-k)$-linear operator $A_{a_{p_1},\dots,a_{p_k}}$ letting $a_{p_1}, \ldots, a_{p_k}$ fixed is $M$-weakly compact.
\end{definition}

Despite of seeming to be a quite small class, plenty of examples of strongly $M$-weakly compact operators are provided in \cite{vinger}. Reinforcing that this class is not small at all, next we add some more examples, showing how $M$-weakly compact linear operators can be used to produce strongly $M$-weakly compact multilinear operators.

\begin{proposition}\label{hxm7}
Let $E_1,\dots,E_m$ be Banach lattices, let $Y_1,\dots,Y_m,X$ be Banach spaces, let $U_1\colon E_1\to Y_1, \ldots, U_m\colon E_m\to Y_m$  be $M$-weakly compact linear operators, and let $B\colon Y_1 \times \cdots \times Y_m \to Y$ be a continuous $m$-linear operator. Then $B \circ (U_1, \ldots,  U_m)$ is strongly $M$-weakly compact, that is, the map $A\colon E_1\times\cdots\times E_m\to X$ given by  $$A(x_1,\dots,x_m)=B(U_1(x_1),\dots,U_m(x_m)),$$
is a strongly $M$-weakly compact $m$-linear operator.
\end{proposition}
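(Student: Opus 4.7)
To prove Proposition \ref{hxm7}, the plan is to unpack the definition of strong $M$-weak compactness and reduce it to the linear $M$-weak compactness of the operators $U_i$ via the continuity of $B$. Fix any $0 \leq k \leq m-1$, indices $p_1 < \cdots < p_k$ in $\{1,\dots,m\}$, and vectors $a_{p_j} \in E_{p_j}$; let $q_1 < \cdots < q_{m-k}$ be the complementary indices. I must show that the resulting $(m-k)$-linear operator $A_{a_{p_1},\dots,a_{p_k}}$ is $M$-weakly compact, i.e. that it sends bounded disjoint sequences in $E_{q_1} \times \cdots \times E_{q_{m-k}}$ to norm-null sequences in $X$.

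The first key observation, which is where the coordinatewise structure of the cartesian product is essential, is that a sequence $((x^n_{q_1},\dots,x^n_{q_{m-k}}))_n$ is disjoint in $E_{q_1}\times\cdots\times E_{q_{m-k}}$ under the coordinatewise order precisely when each coordinate sequence $(x^n_{q_i})_n$ is disjoint in $E_{q_i}$, and it is norm bounded in the max norm precisely when each coordinate sequence is norm bounded in $E_{q_i}$. Granted this, for every $i \in \{1,\dots,m-k\}$ the sequence $(x^n_{q_i})_n$ is a bounded disjoint sequence in the Banach lattice $E_{q_i}$, so the $M$-weak compactness of $U_{q_i}$ yields $\|U_{q_i}(x^n_{q_i})\| \longrightarrow 0$ as $n \to \infty$.

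The conclusion now follows by a direct estimate using the continuity of the multilinear operator $B$. Writing $y^n_{p_j} := a_{p_j}$ and $y^n_{q_i} := x^n_{q_i}$, we have
\[
\bigl\|A_{a_{p_1},\dots,a_{p_k}}(x^n_{q_1},\dots,x^n_{q_{m-k}})\bigr\|
= \bigl\|B(U_1(y^n_1),\dots,U_m(y^n_m))\bigr\|
\leq \|B\|\prod_{j=1}^{k}\|U_{p_j}(a_{p_j})\|\prod_{i=1}^{m-k}\|U_{q_i}(x^n_{q_i})\|.
\]
The first product is a fixed constant, while in the second product (which is nonempty since $k \leq m-1$) every factor tends to $0$, and the remaining factors can be bounded using $\|U_{q_i}(x^n_{q_i})\| \leq \|U_{q_i}\|\,\|x^n_{q_i}\|$ together with the boundedness of the sequence. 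Hence the right-hand side tends to $0$, which shows that $A_{a_{p_1},\dots,a_{p_k}}$ is $M$-weakly compact and completes the proof.

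There is no real obstacle in this argument: the only subtlety is identifying the coordinatewise meaning of disjointness in $E_1\times\cdots\times E_m$ (a point the paper has already been careful about in its definition of $M$-weakly compact multilinear operators), after which the multilinearity and continuity of $B$ reduce everything to the hypotheses on the $U_i$.
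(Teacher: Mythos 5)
Your proof is correct and follows essentially the same route as the paper's: fix the indices and vectors, observe that disjointness and boundedness in the coordinatewise-ordered cartesian product pass to each free coordinate, apply the $M$-weak compactness of each $U_{q_i}$, and conclude via the continuity estimate for $B$. The only difference is cosmetic — you spell out explicitly why the product of norms tends to zero (one factor is null, the rest are bounded), a point the paper leaves implicit.
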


\begin{proof} It is clear that $A$ is $m$-linear and continuous. Let $A_{a_{p_1},\dots,a_{p_k}}$ be the resulting $(m-k)$-linear operator letting  $a_{p_1}\in E_{p_1},\dots,a_{p_k}\in E_{p_k}$ fixed, where $k \in \{0,1,\ldots, m-1\}$ and $p_1 < \cdots < p_k \in \{1, \ldots, m\}$. Given a  bounded disjoint sequence $((x_1^n,\stackrel{[p_1,\dots,p_k]}{\dots},x_m^n))^n$ in $E_1\times\stackrel{[p_1,\dots,p_k]}{\dots}\times E_m$, for every $1\leq i\leq m$ with $i\neq p_1,\dots,i \neq p_k$, $(x_i^n)_n$ is a bounded disjoint sequence in $E_i$, hence $U_i(x_i^n)\longrightarrow 0$ because $U_i$ is $M$-weakly compact. Therefore,
\begin{align*}
\|A_{a_{p_1},\dots,a_{p_k}}&(x_1^n,\stackrel{[p_1,\dots,p_k]}{\dots},x_m^n)\|\\
&=\|A(x_1^n,\dots,x_{p_1-1}^n,a_{p_1},x_{p_1+1}^n,\dots,x_{p_k-1}^n,a_{p_k},x_{p_k+1}^n,\dots,x_m^n)\|\\
&=\|B(U_1(x_1^n)\dots,U_{p_1-1}(x_{p_1-1}^n),U_{p_1}(a_{p_1}),U_{p_1+1}(x_{p_1+1}^n),\\
&~~~~\dots,U_{p_k-1}(x_{p_k-1}^n),U_{p_k}(a_{p_k}),U_{p_k+1}(x_{p_k+1}^n),\dots,U_m(x_m^n))\|\\
&\leq \|B\|\cdot\|U_1(x_1^n)\|\stackrel{[p_1,\dots,p_k]}{\cdots}\|U_m(x_m^n))\|\cdot\|U_{p_1}(a_{p_1}))\|\cdots\|U_{p_k}(a_{p_k}))\|\longrightarrow 0,
\end{align*}
proving that $A_{a_{p_1},\dots,a_{p_k}}$ is $M$-weakly compact.
\end{proof}

\noindent{\bf Problem 1.} Let $E_1, \ldots, E_m$ be Riesz spaces. If the sequence $(x^1_j, \ldots, x^m_j)_j$ is sporadically disjoint in $E_1 \times \cdots \times E_m$, then there exists $i \in \{1, \ldots, m\}$ such that the sequence $(x^i_j)_j$ admits a disjoint subsequence.

\medskip

Very soon we will prove that sporadically $M$-weakly compact operators are strongly $M$-weakly compact. If the answer to Problem 1 turns out to be affirmative, then the following improvement of Proposition \ref{hxm7} holds.

\medskip

\noindent{\bf Problem 2.} Multilinear operators as those in Proposition \ref{hxm7} are sporadically $M$-weakly compact.

\medskip

Next we prove the main result of this section, where we characterize the multilinear operators whose adjoints are $L$-weakly compact and relate this class with the other classes of operators of $M$-weakly compact-type.


\begin{theorem}\label{Teo.M-frac.compBV} Let 
$A \colon E_1 \times \cdots \times E_m \to F$ be an $m$-linear operator of order bounded variation. Consider the following statements. \\
{\rm (i)} $A^*\colon F^*\to \mathcal{L}_{r}(E_1,\dots,E_m;\R)$ is an $L$-weakly compact linear operator.\\
{\rm (ii)} $A^\otimes\colon E_1{\hat{\otimes}}_{|\pi|}\cdots{\hat{\otimes}}_{|\pi|} E_m\to F$ is an $M$-weakly compact linear operator.\\
{\rm (iii)} There are a Banach lattice $G$, a Riesz $m$-morphism $B\colon E_1\times\cdots\times E_m\to G$, and an order bounded $M$-weakly  compact linear operator $T\colon G\to F$ such that $A=T\circ B$.\\
{\rm (iv)} $\|A(x_1^n,\ldots, x_m^n)\|\longrightarrow 0$ for every bounded disjoint sequence of elementary tensors $(x_1^n\otimes\cdots\otimes x_m^n)_n$ in $E_1{\hat{\otimes}}_{|\pi|}\cdots{\hat{\otimes}}_{|\pi|} E_m$.\\
{\rm (v)} $A$ is sporadically $M$-weakly compact.\\
{\rm (vi)} $A$ is strongly $M$-weakly compact.\\
{\rm (vii)} $A$ is separately $M$-weakly compact.\\
{\rm (viii)} $A$ is $M$-weakly compact.

Then {\rm (i)}$\Leftrightarrow${\rm (ii)}$\Leftrightarrow${\rm (iii)}$\Rightarrow${\rm (iv)}$\Leftrightarrow${\rm (v)}$\Rightarrow${\rm (vi)}$\Rightarrow${\rm [(vii)} and {\rm(viii)]}. Moreover, {\rm(vii)}$\nRightarrow${\rm(viii)}, {\rm(viii)}$\nRightarrow${\rm(vii)}, {\rm(viii)}$\nRightarrow${\rm(v)},  {\rm[(vii) and (viii)]}$\nRightarrow${\rm(vi)}.
\end{theorem}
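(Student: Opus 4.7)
The overall strategy is to run the chain (i)$\Leftrightarrow$(ii)$\Leftrightarrow$(iii)$\Rightarrow$(iv)$\Leftrightarrow$(v)$\Rightarrow$(vi)$\Rightarrow$[(vii) and (viii)] by pulling everything back to the positive projective tensor product $E_1 \hat{\otimes}_{|\pi|} \cdots \hat{\otimes}_{|\pi|} E_m$ and then mimicking the dual argument of Theorem \ref{Lfc}. The non-implications are supplied by Example \ref{cont.Lfc-ex1}, by the bilinear counterexample in \cite{vinger}, and by one explicit trilinear construction.

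For (i)$\Leftrightarrow$(ii) I would verify, as in the proof of Theorem \ref{Lfc}, that for every $y^* \in F^*$ one has $A^*(y^*) = (A^\otimes)^*(y^*) \circ \otimes$, so under the Riesz isomorphism identifying $\mathcal{L}_r(E_1,\ldots,E_m;\R)$ with $(E_1 \hat{\otimes}_{|\pi|} \cdots \hat{\otimes}_{|\pi|} E_m)^*$ the operators $A^*$ and $(A^\otimes)^*$ coincide in norm; the conclusion then follows from \cite[Theorem 5.64(1)]{Alip}. For (ii)$\Rightarrow$(iii) take $G := E_1 \hat{\otimes}_{|\pi|} \cdots \hat{\otimes}_{|\pi|} E_m$, $B := \otimes$ (a Riesz $m$-morphism) and $T := A^\otimes$ (order bounded via (\ref{km22})). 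The reverse direction (iii)$\Rightarrow$(ii) is what I expect to be the main obstacle: given $A = T \circ B$ with $B$ a Riesz $m$-morphism, one must produce a Riesz \emph{homomorphism} $B^\otimes$ on $E_1 \hat{\otimes}_{|\pi|} \cdots \hat{\otimes}_{|\pi|} E_m$ with $B = B^\otimes \circ \otimes$ (universal property of the Fremlin tensor product for Riesz multimorphisms), and then write $A^\otimes = T \circ B^\otimes$. The fact that $B^\otimes$ is a Riesz homomorphism (rather than merely bounded) is crucial: it maps bounded disjoint sequences to bounded disjoint sequences, so the $M$-weak compactness of $T$ transfers to $A^\otimes$.

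The implication (ii)$\Rightarrow$(iv) is immediate from $A^\otimes(x_1^n \otimes \cdots \otimes x_m^n) = A(x_1^n,\ldots,x_m^n)$. For the equivalence (iv)$\Leftrightarrow$(v) the key tools are Lemma \ref{TensosseEspor} and the identity $\|x_1 \otimes \cdots \otimes x_m\|_{|\pi|} = \|x_1\| \cdots \|x_m\|$, which follows from $\otimes$ being a Riesz $m$-morphism. The direction (iv)$\Rightarrow$(v) is then straightforward: a sporadically disjoint sequence in $B_{E_1 \times \cdots \times E_m}$ corresponds via $\otimes$ to a disjoint sequence of elementary tensors of $|\pi|$-norm at most $1$. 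For (v)$\Rightarrow$(iv) the subtlety is that a bounded disjoint sequence of elementary tensors need not come from coordinates in a bounded set, so I would rescale: for each $n$ choose $\lambda_{i,n}>0$ with $\prod_i \lambda_{i,n}=1$ (assuming no coordinate is zero, else the term vanishes) so that the norms $\|\lambda_{i,n} x_i^n\|$ are all equal; this leaves both the elementary tensor and the value $A(x_1^n,\ldots,x_m^n)$ unchanged and places every coordinate in a fixed ball, reducing to (v) after an overall scalar normalization.

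The remaining implications are direct. For (v)$\Rightarrow$(vi), given fixed $a_{p_1},\ldots,a_{p_k}$ and a bounded disjoint sequence in the remaining $m-k$ factors, inserting the fixed vectors at positions $p_1,\ldots,p_k$ yields a bounded, sporadically disjoint sequence in $E_1 \times \cdots \times E_m$ (any non-fixed coordinate exhibits the sporadic disjointness), and a scalar normalization together with (v) makes the resulting operator $M$-weakly compact; (vi)$\Rightarrow$(vii) is the specialization $k=m-1$ and (vi)$\Rightarrow$(viii) is the specialization $k=0$. For the non-implications: Example \ref{cont.Lfc-ex1} already settles (vii)$\nRightarrow$(viii); \cite[Example 3.2]{vinger} settles (viii)$\nRightarrow$(vii) and, through (v)$\Rightarrow$(vii), also (viii)$\nRightarrow$(v). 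Finally, for [(vii) and (viii)]$\nRightarrow$(vi) I would exhibit the positive trilinear operator $A \colon \ell_2 \times \ell_2 \times \ell_2 \to \ell_2$ given by $A(a,b,c) = \langle c, e\rangle (a_n b_n)_n$ for a fixed $0 < e \in \ell_2$: positivity gives order bounded variation; freezing any two variables reduces (up to scalars) to the $M$-weakly compact linear operator $a \mapsto (a_n b_n)_n$ from Example \ref{cont.Lfc-ex1}, so $A$ is separately $M$-weakly compact; any coordinatewise disjoint sequence $(a^n,b^n,c^n)$ in $B_{\ell_2^3}$ forces $|\langle c^n,e\rangle| \to 0$ and hence $A(a^n,b^n,c^n) \to 0$, showing $A$ is $M$-weakly compact; but freezing only $c$ with $\langle c,e\rangle \neq 0$ yields a nonzero scalar multiple of the non-$M$-weakly compact bilinear operator $(a,b) \mapsto (a_n b_n)_n$, so $A$ is not strongly $M$-weakly compact.
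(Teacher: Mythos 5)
Your plan is correct and follows essentially the same route as the paper: the Riesz isometric identification of $\mathcal{L}_r(E_1,\dots,E_m;\R)$ with $(E_1\hat{\otimes}_{|\pi|}\cdots\hat{\otimes}_{|\pi|}E_m)^*$ for (i)$\Leftrightarrow$(ii), the Fremlin factorization through $B^\otimes$ for (iii)$\Rightarrow$(ii), Lemma \ref{TensosseEspor} for (iv)$\Leftrightarrow$(v), and the same counterexamples (your trilinear operator is exactly the paper's construction specialized to $E=\ell_2$, $x^*=\langle\cdot,e\rangle$). The only cosmetic differences are that you invoke the linear duality \cite[Theorem 5.64(1)]{Alip} after transporting $A^*$ to $(A^\otimes)^*$ instead of re-running the Burkinshaw--Dodds argument, and that you spell out the rescaling of unbounded coordinates in (v)$\Rightarrow$(iv), which the paper leaves implicit.
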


\begin{proof} First note that $A$ is continuous by Proposition \ref{obv-cont}, hence its adjoint $A^*$ is also continuous (for the linear case, see \cite[Proposition 4.3.11]{Botelho}, the multilinear case is analogous) and its linearization  $A^\otimes$ is continuous as well because $A$ has order bounded variation \cite[Theorem 3.1]{Buskes2}. 

{\rm (i)}$\Leftrightarrow${\rm (ii)} Applying (\ref{km22}) to the case of $m$-linear forms, the map
$$I\colon \mathcal{L}_r(E_1,\dots,E_m;\R)\to(E_1{\hat{\otimes}}_{|\pi|}\cdots{\hat{\otimes}}_{|\pi|} E_m)^* ~,~ I(A)=A^\otimes,$$
is an isometric isomorphism and a Riesz isomorphism. Denoting $U: = B_{E_1\times\cdots\times E_m}$ and $V :=B_{E_1{\hat{\otimes}}_{|\pi|}\cdots{\hat{\otimes}}_{|\pi|} E_m}$, the fact that $I$ is a Riesz homomorphism gives ${\rm Sol}(I(A^*(B_{X^*})))= I({\rm Sol}(A^*(B_{X^*})))$. Since Riesz homomorphisms preserve disjointness and $I$ is an isometric isomorphism, the two conditions below are equivalent: 
\begin{enumerate}
\item[(a)] $\|\phi_n\|_r=\sup\limits_{u\in U}||\phi_n|(u)|\longrightarrow 0$~~ for each disjoint sequence $(\phi_n)_n$ in ${\rm Sol}(A^*(B_{X^*}))$.
\item[(b)] $\|\Phi_n\|=\sup\limits_{v\in V}|\Phi_n(v)|\longrightarrow 0$~\, for each disjoint sequence $(\Phi_n)_n$ in ${\rm Sol}(I(A^*(B_{X^*})))= $ $I({\rm Sol}(A^*(B_{X^*})))$.
\end{enumerate}
Calling on the Burkinshaw–Dodds Theorem once again, condition (b) above is equivalent to
\begin{align*}
\sup\limits_{\Phi^\in I(A^*(B_{X^*}))}|\Phi(v_n)|&=\sup_{x^*\in B_{X^*}}|[I(A^*(x^*))](v_n)|=\sup_{x^*\in B_{X^*}}|[(A^\otimes)^*(x^*)](v_n)|\\&=\sup_{x^*\in B_{X^*}}|x^*(A^\otimes(v_n))|=\|A^\otimes(v_n)\|\longrightarrow0
\end{align*}
for every disjoint sequence $(v_n)_n$ in ${\rm Sol}(V)=V$. It follows that $A^*$ is $L$-weakly compact if and only if $A^\otimes$ is $M$-weakly compact.

(ii)$\Rightarrow$(iii) Just take $F = E_1{\hat{\otimes}}_{|\pi|}\cdots{\hat{\otimes}}_{|\pi|} E_m$, $T = A^\otimes$ and note that $A^\otimes$ is order bounded because $A$ has order bounded variation \cite[Theorem 3.1]{Buskes2}.

(iii)$\Rightarrow$(ii) For any elementary tensor $x_1\otimes\cdots\otimes x_m$ in $E_1{\hat{\otimes}}_{|\pi|}\cdots{\hat{\otimes}}_{|\pi|} E_m$,  $$A^\otimes(x_1\otimes\cdots\otimes x_m)=A(x_1,\dots,x_m)=T(B(x_1,\dots,x_m))=T(B^\otimes(x_1\otimes\cdots\otimes x_m)).$$
By linearity, $A^\otimes(z)=T(B^\otimes(z))$ for every $z\in E_1\otimes\cdots\otimes E_m$. As $E_1\otimes\cdots\otimes E_m$ is dense in $E_1{\hat{\otimes}}_{|\pi|}\cdots{\hat{\otimes}}_{|\pi|} E_m$, by continuity we have $A^\otimes=T\circ B^\otimes$. Given a bounded disjoint sequence $(z_n)_n$ in $E_1{\hat{\otimes}}_{|\pi|}\cdots{\hat{\otimes}}_{|\pi|} E_m$, $(B^\otimes(z_n))_n$ is a bounded disjoint sequence in $F$ because $B^\otimes$ is a Riesz homomorphism by \cite[Theorem 4.2(i)]{FremlinTPAVL}. Since $T$ is $M$-weakly compact, $A^\otimes(z_n)=T(B^\otimes(z_n))\longrightarrow 0$, which proves (ii).

(ii)$\Rightarrow$(iv) This implication is obvious.

(iv)$\Leftrightarrow$(v)  This equivalence follows from Lemma \ref{TensosseEspor}.

(v)$\Rightarrow$(vi) Let $k \in \{0, 1, \ldots, m-1\}, p_1<\cdots<p_k\in\{1,\dots,m\}$ and $a_{p_1}\in E_{p_1},\dots,a_{p_k}\in E_{p_k}$ be given. For bounded disjoint sequences $(x_i^n)_n$ in $E_i$, $1\leq i\leq m$, it is plain that the sequence  $$(x_{a_{p_1},\dots,a_{p_k}}^n)_n: =((x_1^n,\dots,x_{p_1-1}^n,a_{p_1},x_{p_1+1}^n,\dots,x_{p_k-1}^n,a_{p_k},x_{p_k+1}^n,\dots,x_m^n))_n$$ is bounded and sporadically disjoint in $E_1\times\cdots\times E_m$. Since $A$ is sporadically $M$-weakly compact,  $A(x_{a_{p_1},\dots,a_{p_k}}^n)\longrightarrow 0$, therefore $A$ is strongly $M$-weakly compact.

(vi)$\Rightarrow$(vii) This implication follows by  taking $k = m-1$ in Definition  \ref{defseparM}, and (vi)$\Rightarrow$(viii) follows by taking $k = 0$ in the same definition.

{\rm(vii)}$\nRightarrow${\rm(viii)} See Example \ref{o2ny}.

{\rm(viii)}$\nRightarrow${\rm(vii)} See \cite[Example 3.2]{vinger}.

{\rm(viii)}$\nRightarrow${\rm(v)} It follows from {\rm(viii)}$\nRightarrow${\rm(vii)} and (v)$\Rightarrow$(vi)$\Rightarrow$(vii).

{\rm[(vii) and (viii)]}$\nRightarrow${\rm(vi)} Let $A\colon \ell_2\times \ell_2\to \ell_2$ be the positive bilinear operator  we proved in Example \ref{cont.Lfc-ex1} to be, among other things, separately $M$-weakly compact. Let $E$ be any Banach lattice such that $E^*$ has order continuous norm and fix $0 < x^* \in E^*$. 
Consider the positive $3$-linear operator
$$B\colon E\times \ell_2\times\ell_2\to\ell_2~,~ B(x,a,b)=x^*(x)\!\cdot\! A(a,b).$$
As positive multilinear operators have order bounded variation \cite[p.\,49]{Buskes2}, $B \in {\cal L}_{bv}(E,\ell_2, \ell_2; \ell_2)$. Fixing the second and the third variables, the resulting linear operator is a multiple of $x^*$, which is $M$-weakly compact by \cite[Theorem 4.69]{Alip}. Fixing any two other coordinates, the resulting operator is a multiple of a linear operator obtained by fixing one coordinate of $A$, which is $M$-weakly compact because $A$ is separately $M$-weakly compact. Thus, $B$ is separately $M$-weakly compact. If $(x_n)_n$ is a disjoint sequence in $B_E$ and $(\alpha^n)_n,(\beta^n)_n$ are disjoint sequences $B_{\ell_2}$, then $$\|B(x_n,\alpha^n,\beta^n)\|_2=|x^*(x_n)|\!\cdot\!\|A(\alpha^n,\beta^n)\|_2\leq
|x^*(x_n)|\!\cdot\!\|A\|\!\cdot\!\|\alpha^n\|_2
\!\cdot\!\|\beta^n\|_2\longrightarrow 0,$$ because $x^*$ is $M$-weakly compact. This shows that $B$ is $M$-weakly compact. To complete the proof, take $x\in E$ with $x^*(x)=1$ and note that the resulting bilinear operator fixing $x$ in the first variable of $B$ is equal to  $A$, which is not $M$-weakly compact by Example \ref{o2ny}. Therefore, $B$ fails to be strongly $M$-weakly compact.
\end{proof}

\begin{remark}\rm (a) In the theorem above, the following implications hold, with the same proofs, for a continuous $m$-linear operator taking values in a Banach space: (iii)$\Rightarrow$(iv)$\Leftrightarrow${\rm (v)}$\Rightarrow${\rm (vi)}$\Rightarrow${\rm [(vii)} and {\rm(viii)]}. The counterexamples also apply to this situation,  because all of them are positive multilinear operators between Banach lattices, hence continuous.\\
(b) As to condition (iii) above, it is worth noting that there exist non-order bounded $M$-weakly compact linear operators (see \cite[Theorem 2.1]{chenwickstead} and its proof).
\end{remark}

\noindent{\bf Acknowledgment.} The authors thank Luis Alberto Garcia for his helpful remarks.

\bigskip
\noindent Geraldo Botelho~~~~~~~~~~~~~~~~~~~~~~~~~~~~~~~~~~~~~~Ariel Mon\c c\~ao\\
Instituto de Matem\'atica e Estat\'istica~~~~~~~~~\,\,\,Departamento de Matemática\\
Universidade Federal de Uberl\^andia~~~~~~~~~~~~~Universidade Federal de Minas Gerais\\
38.400-902 -- Uberl\^andia -- Brazil~~~~~~~~~~~~~~~~~31.270-901 -- Belo Horizonte -- Brazil\\
e-mail: botelho@ufu.br~~~~~~~~~~~~~~~~~~~~~~~~~\,~~~~~e-mail: arieldeom@hotmail.com
\end{document}